\begin{document}

\newenvironment{proof}[1][Proof]{\textbf{#1.} }{\ \rule{0.5em}{0.5em}}

\newtheorem{theorem}{Theorem}[section]
\newtheorem{definition}[theorem]{Definition}
\newtheorem{lemma}[theorem]{Lemma}
\newtheorem{remark}[theorem]{Remark}
\newtheorem{proposition}[theorem]{Proposition}
\newtheorem{corollary}[theorem]{Corollary}
\newtheorem{example}[theorem]{Example}

\numberwithin{equation}{section}
\newcommand{\ep}{\varepsilon}
\newcommand{\R}{{\mathbb  R}}
\newcommand\C{{\mathbb  C}}
\newcommand\Q{{\mathbb Q}}
\newcommand\Z{{\mathbb Z}}
\newcommand{\N}{{\mathbb N}}

\newcommand{\bfi}{\bfseries\itshape}

\newsavebox{\savepar}
\newenvironment{boxit}{\begin{lrbox}{\savepar}
\begin{minipage}[b]{15.5cm}}{\end{minipage}\end{lrbox}
\fbox{\usebox{\savepar}}}

\title{{\bf On invariant properties of natural differential operators associated to geometric structures on $\mathbb{R}^n$}}
\author{R\u{a}zvan M. Tudoran}

\date{}
\maketitle \makeatother

\begin{abstract}
We provide a general framework to study invariant properties of various gradient-like and Laplace-like differential operators naturally associated to geometric structures on $\mathbb{R}^n$, which encompass Euclidean, Minkowski, pseudo-Euclidean and symplectic structures.
\end{abstract}

\medskip

\textbf{MSC 2020}: 47A07; 47D03; 47F05.

\textbf{Keywords}: geometric structures; gradient-like vector fields; invariant functions; Laplace-like operators.

\section{Introduction}
\label{section:one}

The aim of this article is to provide a general framework to study invariant properties of various gradient-like and Laplace-like differential operators naturally associated to geometric structures on $\mathbb{R}^n$, which encompass Euclidean, Minkowski, pseudo-Euclidean and symplectic structures. In this regard we shall call \textit{geometric structure} any non-degenerate bilinear form $b$ on $\mathbb{R}^n$, and we will denote by $\mathbb{R}^{n}_{b}$ the pair $(\mathbb{R}^n, b)$.

The structure of the article is the following. In the second section we recall from \cite{TDR} the definition of geometric structures and also a natural representation of any geometric structure in terms of the canonical Euclidean inner product on $\mathbb{R}^n$. In the third section we define the notion of left/right-adjoint operator induced by a geometric structure and also we provide some natural properties of these operators. The fourth section is devoted to the study of the kernel and the image of left/right-adjoint operators induced by a general geometric structure. In the fifth section we associate to any fixed geometric structure $b$ a natural group, denoted by $G_b$, consisting of the set of linear operators preserving the geometric structure $b$, and we provide two equivalent definitions of this group in terms of left/right-adjoint operators. In the sixth section we introduce two gradient-like operators naturally associated to any given geometric structure $b$, in order to study the compatibility between smooth $G_b$-invariant scalar functions and smooth $G_b$-equivariant vector fields on $\mathbb{R}^{n}_{b}$. The main result of this section provides explicitly in terms of left/right-gradient vector fields, two sets of generators for the module of smooth $G_b$-equivaraint vector fields on $\mathbb{R}^{n}_{b}$. The aim of the last section is to introduce and study $G_b$-invariant properties of a Laplace-like operator naturally associated to any given geometric structure on $\mathbb{R}^n$. This operator generalizes the classical Laplace operator in the case when the geometric structure is Euclidean, and also the d'Alembert operator in the case when the geometric structure is Minkowski.

\section{Natural geometric structures on $\mathbb{R}^n$}

The goal of this short section is to recall from \cite{TDR} the definition of \textit{geometric structures}, motivated by the observation that the classical geometries on $\mathbb{R}^n$ (i.e. Euclidean, Minkowski, pseudo-Euclidean, symplectic) are each generated by some nondegenerate real bilinear form with specific properties. 

In order to have a general setting that encompasses all of the above mentioned geometries, we introduced in \cite{TDR} the notion of \textit{geometric structure}. Specifically, by \textit{geometric structure} on $\mathbb{R}^{n}$ as defined in \cite{TDR}, we mean a fixed nondegenerate real bilinear form on $\mathbb{R}^n$. The pair $(\mathbb{R}^{n},b)$, where $b$ is a geometric structure on $\mathbb{R}^n$, is denoted by $\mathbb{R}^{n}_{b}$.

This definition of geometric structure includes naturally the Euclidean, Minkowski, pseudo-Euclidean, and symplectic geometries: if $b$ is symmetric and positive definite, then $\mathbb{R}^{n}_{b}$ is an Euclidean vector space, if $n$ is even and $b$ is skew--symmetric, then $\mathbb{R}^{n}_{b}$ is a symplectic vector space, if $b$ is symmetric with the signature $(n-k,k)$, $k\neq 0$, then $\mathbb{R}^{n}_{b}$ is a pseudo-Euclidean vector space (if $k=1$ then $\mathbb{R}^{n}_{b}$ is a Minkowski vector space).

Let us recall now a result from \cite{TDR} which is essential for the rest of the article. More precisely, for any arbitrary fixed geometric structure $b$ on $\mathbb{R}^{n}$, there exists a \textit{unique} invertible linear map $B\in\operatorname{Aut}(\mathbb{R}_{b}^{n})$ such that 
\begin{equation}\label{relb}
\langle \mathbf{x},\mathbf{y} \rangle = b(\mathbf{x}, B\mathbf{y}), ~~\forall \mathbf{x},\mathbf{y}\in\mathbb{R}^{n}_{b},
\end{equation}
where $\langle \cdot,\cdot \rangle$ stands for the canonical Euclidean inner product on $\mathbb{R}^{n}$. The pair $(b,B)$ is called the \textit{geometric pair} on $\mathbb{R}^{n}$ generated by the geometric structure $b$.

\section{The left/right - adjoint operators}

Given a geometric structure $b$ on $\mathbb{R}^n$, for any operator $A\in\operatorname{End}(\mathbb{R}_{b}^{n})$ we define two natural operators compatible with the geometric structure. These operators generalize the classical adjoint operator with respect to an Euclidean or pseudo-Euclidean structure.
  
\begin{definition}\label{starLR}
Let $b$ be a geometric structure on $\mathbb{R}^n$ and $A\in\operatorname{End}(\mathbb{R}_{b}^{n})$. Then there exist two linear maps, denoted by $A^{\star_{L}},A^{\star_{R}}\in\operatorname{End}(\mathbb{R}_{b}^{n})$ and called the \textit{left-adjoint}, respectively the \textit{right-adjoint} of $A$ with respect to the geometric structure $b$, uniquely defined by the following relations:
\begin{equation*}
b(A^{\star_{L}}\mathbf{x}, \mathbf{y})= b(\mathbf{x}, A\mathbf{y}), ~~\forall \mathbf{x},\mathbf{y}\in\mathbb{R}^{n}_{b},
\end{equation*}
\begin{equation*}
b(A\mathbf{x}, \mathbf{y})= b(\mathbf{x}, A^{\star_{R}}\mathbf{y}), ~~\forall \mathbf{x},\mathbf{y}\in\mathbb{R}^{n}_{b}.
\end{equation*}
\end{definition}
In terms of geometric pairs \eqref{relb} we obtain the following characterization of the left/right-adjoint operators.
\begin{proposition}\label{pimp}
Let $b$ be a geometric structure on $\mathbb{R}^n$ and let $(b,B)$ be the associated geometric pair. Then for each  $A\in\operatorname{End}(\mathbb{R}_{b}^{n})$ the following assertions hold:
\begin{itemize}
\item[(i)] $A^{\star_{L}}=B^{\top}A^{\top}(B^{\top})^{-1}$,
\item[(ii)]$A^{\star_{R}}=B A^{\top} B^{-1}$,
\item[(iii)] $\operatorname{Id}^{\star_L}=\operatorname{Id}^{\star_R}=\operatorname{Id}$, $B^{\star_{L}}=B^{\top}$, $B^{\star_{R}}=B B^{\top} B^{-1}$,
\end{itemize}
where $\top$ stands for the adjoint with respect to $\langle\cdot,\cdot\rangle$, the canonical Euclidean inner product on $\mathbb{R}^{n}$.
\end{proposition}
\begin{proof} From the definition of a geometric pair \eqref{relb} we obtain that 
\begin{equation}\label{use}
b(\mathbf{x}, \mathbf{y})=\langle \mathbf{x},B^{-1}\mathbf{y} \rangle, ~~\forall \mathbf{x},\mathbf{y}\in\mathbb{R}^{n}_{b}.
\end{equation}
\begin{itemize}
\item[(i)] Using the equality \eqref{use}, the relation $b(A^{\star_{L}}\mathbf{x}, \mathbf{y})= b(\mathbf{x}, A\mathbf{y}), \forall \mathbf{x},\mathbf{y}\in\mathbb{R}^{n}_{b}$, becomes
\begin{align*}
\langle A^{\star_{L}}\mathbf{x},B^{-1}\mathbf{y} \rangle =\langle \mathbf{x},B^{-1}A\mathbf{y} \rangle, \forall \mathbf{x},\mathbf{y}\in\mathbb{R}^{n}_{b},
\end{align*}
which is equivalent to
\begin{align*}
\langle (B^{-1})^{\top}A^{\star_{L}}\mathbf{x},\mathbf{y} \rangle =\langle A^{\top}(B^{-1})^{\top}\mathbf{x},\mathbf{y} \rangle, \forall \mathbf{x},\mathbf{y}\in\mathbb{R}^{n}_{b}.
\end{align*}
Consequently we get that $(B^{-1})^{\top}A^{\star_{L}}=A^{\top}(B^{-1})^{\top}$ and hence $A^{\star_{L}}=B^{\top}A^{\top}(B^{\top})^{-1}$.
\item[(ii)] The proof is similar to that of the previous item, this time using the equality \eqref{use} and the relation
\begin{equation*}
b(A\mathbf{x}, \mathbf{y})= b(\mathbf{x}, A^{\star_{R}}\mathbf{y}), ~~\forall \mathbf{x},\mathbf{y}\in\mathbb{R}^{n}_{b}.
\end{equation*}
\item[(iii)] The proof follows directly from $(i)$ and $(ii)$.
\end{itemize}
\end{proof}

Let us now emphasize some natural properties of the left/right - adjoint operators following directly from Proposition \ref{pimp}.
\begin{remark}
Let $b$ be a geometric structure on $\mathbb{R}^n$ and let $(b,B)$ be the associated geometric pair. Then the left/right - adjoint operators verify the following identities:
\begin{itemize}
\item[(i)] $(\lambda_1 A_1+\lambda_2 A_2)^{\star_{L}}=\lambda_1 A_1 ^{\star_{L}}+\lambda_2 A_2 ^{\star_{L}}$, $(\lambda_1 A_1+\lambda_2 A_2)^{\star_{R}}=\lambda_1 A_1 ^{\star_{R}}+\lambda_2 A_2 ^{\star_{R}}$, $\forall \lambda_1, \lambda_2 \in\mathbb{R}$, $\forall A_1, A_2 \in\operatorname{End}(\mathbb{R}_{b}^{n})$,
\item[(ii)] $(A^{\star_{L}})^{\star_{R}}=(A^{\star_{R}})^{\star_{L}}=A$, $\forall A\in\operatorname{End}(\mathbb{R}_{b}^{n})$,
\item[(iii)] $(A^{\star_{L}})^{\star_{L}}=A$, $\forall A\in\operatorname{End}(\mathbb{R}_{b}^{n})$ if and only if $B^{\top}=\varepsilon B$ with $\varepsilon^n=1$, i.e. when $b$ is symmetric ($\varepsilon=1$) or skew-symmetric ($\varepsilon=-1$ and $n$ is even),
\item[(iv)] $(A^{\star_{R}})^{\star_{R}}=A$, $\forall A\in\operatorname{End}(\mathbb{R}_{b}^{n})$ if and only if $B^{\top}=\varepsilon B$ with $\varepsilon^n=1$, i.e. when $b$ is symmetric ($\varepsilon=1$) or skew-symmetric ($\varepsilon=-1$ and $n$ is even),
\item[(v)] $A^{\star_{L}}=A^{\star_{R}}$, $\forall A\in\operatorname{End}(\mathbb{R}_{b}^{n})$ if and only if $B^{\top}=\varepsilon B$ with $\varepsilon^n=1$, i.e. when $b$ is symmetric ($\varepsilon=1$) or skew-symmetric ($\varepsilon=-1$ and $n$ is even),
\item[(vi)] $(A_1 A_2)^{\star_{L}}=A_2 ^{\star_{L}} A_1 ^{\star_{L}}$, $(A_1 A_2)^{\star_{R}}=A_2 ^{\star_{R}} A_1 ^{\star_{R}}$, $\forall A_1, A_2 \in\operatorname{End}(\mathbb{R}_{b}^{n})$,
\item[(vii)] $(A^{-1})^{\star_L}=(A^{\star_L})^{-1}$, $(A^{-1})^{\star_R}=(A^{\star_R})^{-1}$, $\forall A\in\operatorname{Aut}(\mathbb{R}_{b}^{n})$,
\item[(viii)] $[A_1,A_2]^{\star_{L}}=[A_2 ^{\star_{L}},A_1 ^{\star_{L}}]$, $[A_1,A_2]^{\star_{R}}=[A_2 ^{\star_{R}},A_1 ^{\star_{R}}]$, $\forall A_1, A_2 \in\operatorname{End}(\mathbb{R}_{b}^{n})$, where $[\cdot,\cdot]$ stands for the standard commutator of operators, i.e. $[A_1,A_2]=A_1 A_2 - A_2 A_1$, $\forall A_1, A_2 \in\operatorname{End}(\mathbb{R}_{b}^{n})$.
\end{itemize}
\end{remark}

\section{The kernel and image of a left/right - adjoint operator}

Two of the most important subspaces associated to a general operator are the kernel and the image. The aim of this section is to analyze these subspaces associated to the left/right - adjoint operators. In order to do that let us introduce first some terminology. 

More precisely, given a geometric structure $b$ and a vector subspace $V\subseteq \mathbb{R}^{n}_{b}$, there exist two (isomorphic) vector subspaces, $V^{\perp_L}=\{w\in\mathbb{R}^{n}_{b}: ~ b(w,v)=0, \forall v\in V \}$ and  $V^{\perp_R}=\{w\in\mathbb{R}^{n}_{b}: ~ b(v,w)=0, \forall v\in V \}$, both of dimension $n-\operatorname{dim}_{\mathbb{R}} V$ (note that if $b$ is symmetric or skew-symmetric, then $V^{\perp_L}=V^{\perp_R}$).

Counting dimensions, it follows that for any vector subspace $V\subseteq \mathbb{R}^{n}_{b}$,   $$\operatorname{dim}_{\mathbb{R}}(V^{\perp_L})^{\perp_R}=\operatorname{dim}_{\mathbb{R}}(V^{\perp_R})^{\perp_L}=\operatorname{dim}_{\mathbb{R}}V,$$thus, taking into account the natural inclusions $V\subseteq (V^{\perp_L})^{\perp_R}$, $V\subseteq (V^{\perp_R})^{\perp_L}$, we get
\begin{equation}\label{orLR}
(V^{\perp_L})^{\perp_R}=(V^{\perp_R})^{\perp_L}=V.
\end{equation}
Let us now return to the characterization of the kernel and image of the left/right - adjoint operators. The following result provides some natural relations concerning the image and the kernel of the operators $A, A^{\star_L}$ and $A^{\star_R}$, for any given $A\in\operatorname{End}(\mathbb{R}_{b}^{n})$.
\begin{theorem}
Let $b$ be a geometric structure on $\mathbb{R}^n$ and $A\in\operatorname{End}(\mathbb{R}_{b}^{n})$. Then the following relations hold true:
\begin{itemize}
\item[(i)] $\operatorname{Ker} A^{\star_L}=(\operatorname{Im} A)^{\perp_L}$, $\operatorname{Ker} A^{\star_R}=(\operatorname{Im} A)^{\perp_R}$, $(\operatorname{Ker} A^{\star_L})^{\perp_R}=(\operatorname{Ker} A^{\star_R})^{\perp_L}$,
\item[(ii)] $\operatorname{Im} A^{\star_L}=(\operatorname{Ker} A)^{\perp_L}$, $\operatorname{Im} A^{\star_R}=(\operatorname{Ker} A)^{\perp_R}$, $(\operatorname{Im} A^{\star_L})^{\perp_R}=(\operatorname{Im} A^{\star_R})^{\perp_L}$,
\item[(iii)] $\operatorname{Ker} A=(\operatorname{Im} A^{\star_L})^{\perp_R}=(\operatorname{Im} A^{\star_R})^{\perp_L}$, $\operatorname{Im} A=(\operatorname{Ker} A^{\star_L})^{\perp_R}=(\operatorname{Ker} A^{\star_R})^{\perp_L}$.
\end{itemize}
\end{theorem}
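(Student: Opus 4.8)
The plan is to establish the two kernel identities in part (i) directly from the defining relations of the adjoints together with the nondegeneracy of $b$, and then to obtain every remaining equality by purely formal manipulation, using the biorthogonality relation \eqref{orLR} and the cross-involution $(A^{\star_{L}})^{\star_{R}}=(A^{\star_{R}})^{\star_{L}}=A$ from the preceding remark. The key observation is that each $\star$-relation is tailor-made to convert a kernel condition into an orthogonality condition: $\star_{L}$ pairs with $\perp_{L}$ and $\star_{R}$ pairs with $\perp_{R}$.

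First I would prove $\operatorname{Ker} A^{\star_{L}}=(\operatorname{Im} A)^{\perp_{L}}$. Fix $w$. Since $b$ is nondegenerate, $A^{\star_{L}}w=0$ if and only if $b(A^{\star_{L}}w,\mathbf{y})=0$ for all $\mathbf{y}$; by the defining relation $b(A^{\star_{L}}w,\mathbf{y})=b(w,A\mathbf{y})$, this is equivalent to $b(w,A\mathbf{y})=0$ for all $\mathbf{y}$, which is precisely the condition $w\in(\operatorname{Im}A)^{\perp_{L}}$. The same argument, now using $b(A\mathbf{x},\mathbf{y})=b(\mathbf{x},A^{\star_{R}}\mathbf{y})$ and nondegeneracy in the second slot, yields $\operatorname{Ker} A^{\star_{R}}=(\operatorname{Im} A)^{\perp_{R}}$. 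Applying $\perp_{R}$ to the first identity and $\perp_{L}$ to the second and invoking \eqref{orLR} gives $(\operatorname{Ker}A^{\star_{L}})^{\perp_{R}}=\operatorname{Im}A=(\operatorname{Ker}A^{\star_{R}})^{\perp_{L}}$, which completes (i) and simultaneously delivers the equalities $\operatorname{Im}A=(\operatorname{Ker}A^{\star_{L}})^{\perp_{R}}=(\operatorname{Ker}A^{\star_{R}})^{\perp_{L}}$ from part (iii).

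For part (ii) I would exploit the cross-involution. Replacing $A$ by $A^{\star_{L}}$ in the identity $\operatorname{Ker}A^{\star_{R}}=(\operatorname{Im}A)^{\perp_{R}}$ and using $(A^{\star_{L}})^{\star_{R}}=A$ gives $\operatorname{Ker}A=(\operatorname{Im}A^{\star_{L}})^{\perp_{R}}$; applying $\perp_{L}$ together with \eqref{orLR} then yields $\operatorname{Im}A^{\star_{L}}=(\operatorname{Ker}A)^{\perp_{L}}$. Symmetrically, replacing $A$ by $A^{\star_{R}}$ in $\operatorname{Ker}A^{\star_{L}}=(\operatorname{Im}A)^{\perp_{L}}$ and using $(A^{\star_{R}})^{\star_{L}}=A$ gives $\operatorname{Ker}A=(\operatorname{Im}A^{\star_{R}})^{\perp_{L}}$, whence $\operatorname{Im}A^{\star_{R}}=(\operatorname{Ker}A)^{\perp_{R}}$. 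The two displayed identities $\operatorname{Ker}A=(\operatorname{Im}A^{\star_{L}})^{\perp_{R}}=(\operatorname{Im}A^{\star_{R}})^{\perp_{L}}$ of part (iii) are exactly what was produced here, and the last equality of (ii), namely $(\operatorname{Im}A^{\star_{L}})^{\perp_{R}}=(\operatorname{Im}A^{\star_{R}})^{\perp_{L}}$, follows at once since both sides equal $\operatorname{Ker}A$.

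There is no substantial obstacle: the entire content is the pair of kernel identities in (i), and these are immediate from nondegeneracy. The only point requiring care is the bookkeeping of the left/right sides --- choosing, at each step, which of $\perp_{L},\perp_{R}$ to apply so that \eqref{orLR} collapses a double complement back to the original subspace, and consistently pairing $\star_{L}$ with $\perp_{L}$ (through the left-slot defining relation) and $\star_{R}$ with $\perp_{R}$ (through the right-slot defining relation). Maintaining these pairings is precisely what makes all the stated equalities close up without inconsistency.
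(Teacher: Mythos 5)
Your proposal is correct and follows essentially the same route as the paper: both establish $\operatorname{Ker} A^{\star_L}=(\operatorname{Im} A)^{\perp_L}$ and $\operatorname{Ker} A^{\star_R}=(\operatorname{Im} A)^{\perp_R}$ directly from the defining relations and nondegeneracy of $b$, then derive all remaining equalities formally via the biorthogonality relation \eqref{orLR} and the cross-involution $(A^{\star_L})^{\star_R}=(A^{\star_R})^{\star_L}=A$. The only difference is cosmetic bookkeeping: you harvest the identities of part (iii) as byproducts while proving (i) and (ii), whereas the paper derives them in a separate final step by the same substitutions.
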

\begin{proof}
\begin{itemize}
\item[(i)] Let us start by proving that $(\operatorname{Im} A)^{\perp_L}=\operatorname{Ker} A^{\star_{L}}$. This equality is obtained directly from the following equivalences
\begin{align*}
w\in(\operatorname{Im} A)^{\perp_L} &\Leftrightarrow [ b(w,Av)=0, \forall v\in\mathbb{R}^{n}_{b}] \Leftrightarrow [b(A^{\star_L}w,v)=0, \forall v\in\mathbb{R}^{n}_{b}] \\
&\Leftrightarrow A^{\star_L}w=0\Leftrightarrow w\in\operatorname{Ker} A^{\star_{L}}.
\end{align*}
Similarly we get that $(\operatorname{Im} A)^{\perp_R}=\operatorname{Ker} A^{\star_R}$.

Using the identities $\operatorname{Ker} A^{\star_L}=(\operatorname{Im} A)^{\perp_L}$, $\operatorname{Ker} A^{\star_R}=(\operatorname{Im} A)^{\perp_R}$ and the relations \eqref{orLR} we obtain
\begin{align*}
(\operatorname{Ker} A^{\star_L})^{\perp_R}=((\operatorname{Im} A)^{\perp_L})^{\perp_R}=((\operatorname{Im} A)^{\perp_R})^{\perp_L}=(\operatorname{Ker} A^{\star_R})^{\perp_L}.
\end{align*}
\item[(ii)] From the second equality of item $(i)$ together with relations \eqref{orLR} it follows that
\begin{align*}
(\operatorname{Ker} A^{\star_R})^{\perp_L}=((\operatorname{Im} A)^{\perp_R})^{\perp_L}\Leftrightarrow (\operatorname{Ker} A^{\star_R})^{\perp_L}=\operatorname{Im} A.
\end{align*}
Replacing $A$ by $A^{\star_L}$ in the equality $\operatorname{Im} A=(\operatorname{Ker} A^{\star_R})^{\perp_L}$ and taking into account that $(A^{\star_L})^{\star_R}=A$ we get
\begin{align*}
\operatorname{Im} A^{\star_L}=(\operatorname{Ker} (A^{\star_L})^{\star_R})^{\perp_L}\Leftrightarrow \operatorname{Im} A^{\star_L}=(\operatorname{Ker} A)^{\perp_L}.
\end{align*}
Similarly we obtain $\operatorname{Im} A^{\star_R}=(\operatorname{Ker} A)^{\perp_R}$.

Using the identities $\operatorname{Im} A^{\star_L}=(\operatorname{Ker} A)^{\perp_L}$, $\operatorname{Im} A^{\star_R}=(\operatorname{Ker} A)^{\perp_R}$ and the relations \eqref{orLR} we get
\begin{align*}
(\operatorname{Im} A^{\star_L})^{\perp_R}=((\operatorname{Ker} A)^{\perp_L})^{\perp_R}=((\operatorname{Ker} A)^{\perp_R})^{\perp_L}=(\operatorname{Im} A^{\star_R})^{\perp_L}.
\end{align*}
\item[(iii)] Replacing $A$ by $A^{\star_L}$ in the relation $\operatorname{Ker} A^{\star_R}=(\operatorname{Im} A)^{\perp_R}$ and taking into account that $(A^{\star_L})^{\star_R}=A$, we obtain
\begin{align}\label{eg1}
\operatorname{Ker} (A^{\star_L})^{\star_R}=(\operatorname{Im} A^{\star_L})^{\perp_R}\Leftrightarrow \operatorname{Ker} A=(\operatorname{Im} A^{\star_L})^{\perp_R}.
\end{align}
Now substituting $A$ by $A^{\star_R}$ in the relation $\operatorname{Ker} A^{\star_L}=(\operatorname{Im} A)^{\perp_L}$ and considering that $(A^{\star_R})^{\star_L}=A$, we get
\begin{align}\label{eg2}
\operatorname{Ker} (A^{\star_R})^{\star_L}=(\operatorname{Im} A^{\star_R})^{\perp_L}\Leftrightarrow \operatorname{Ker} A=(\operatorname{Im} A^{\star_R})^{\perp_L}.
\end{align}
From \eqref{eg1} and \eqref{eg2} it follows that $\operatorname{Ker} A=(\operatorname{Im} A^{\star_L})^{\perp_R}=(\operatorname{Im} A^{\star_R})^{\perp_L}$.

Similarly we obtain $\operatorname{Im} A=(\operatorname{Ker} A^{\star_L})^{\perp_R}=(\operatorname{Ker} A^{\star_R})^{\perp_L}$.
\end{itemize}
\end{proof}

\section{Natural groups associated to geometric structures}

Given a geometric structure $b$ on $\mathbb{R}^n$ let us define two groups naturally associated to the left/right - adjoint operators,
\begin{equation*}
G_{b}^{L}=\{ A\in \operatorname{End}(\mathbb{R}_{b}^{n}):~ A^{\star_L} A =\operatorname{Id}\},~~
G_{b}^{R}=\{ A\in \operatorname{End}(\mathbb{R}_{b}^{n}):~ A^{\star_R} A=\operatorname{Id}\}.
\end{equation*}
Note that from the Definition \ref{starLR} of left/right - adjoint operators it yields
$$
G_{b}^{L}=G_{b}^{R}=G_{b}:=\{A\in \operatorname{End}(\mathbb{R}_{b}^{n}): b(A\mathbf{x},A\mathbf{y})=b(\mathbf{x},\mathbf{y}),\forall \mathbf{x},\mathbf{y}\in\mathbb{R}^{n}_{b}\}.
$$
In terms of the geometric pair $(b,B)$, using the relation $A^{\star_{L}}=B^{\top}A^{\top}(B^{\top})^{-1}$ from Proposition \ref{pimp}, we get that $\det A^{\star_L}=\det A$ and hence each $A\in G_{b}^{L}$ is invertible (since $(\det A)^2=1$) with $A^{-1}=A^{\star_L}$. Consequently, the following equivalences hold
\begin{align}\label{gr1}
 A^{\star_L} A =\operatorname{Id} \Leftrightarrow A A^{\star_L} =\operatorname{Id}  \Leftrightarrow  A B^{\top}A^{\top}(B^{\top})^{-1}=\operatorname{Id} \Leftrightarrow A B^{\top}A^{\top}=B^{\top}\Leftrightarrow A B A^{\top}=B.
\end{align}
Similarly, using the relation $A^{\star_{R}}=B A^{\top}B^{-1}$ from Proposition \ref{pimp} we obtain that each $A\in G_{b}^{R}$ is invertible (since $(\det A)^2=1$) with $A^{-1}=A^{\star_R}$ and verifies the following equivalences
\begin{align}\label{gr2}
 A^{\star_R} A=\operatorname{Id} \Leftrightarrow A A^{\star_R} =\operatorname{Id} \Leftrightarrow  A B A^{\top}B^{-1}=\operatorname{Id} \Leftrightarrow A B A^{\top}=B.
\end{align}
From the equalities \eqref{gr1} and \eqref{gr2} it follows that $$G_{b}^{L}=G_{b}^{R}=G_b=\{A\in\operatorname{End}(\mathbb{R}_{b}^{n}):~ A B A^{\top}=B\}\leq \operatorname{Aut}(\mathbb{R}_{b}^{n}).$$
Thus, $G_{b}$ is a Lie group with Lie algebra given by
$$
\operatorname{Lie}(G_b):=\mathfrak{g}_{b}=\{A\in\operatorname{End}(\mathbb{R}_{b}^{n}):~ AB=-BA^{\top}\},
$$
or equivalently
\begin{align*}
\mathfrak{g}_{b}&=\{A\in \operatorname{End}(\mathbb{R}_{b}^{n}): b(A\mathbf{x},\mathbf{y})=-b(\mathbf{x},A\mathbf{y}),\forall \mathbf{x},\mathbf{y}\in\mathbb{R}^{n}_{b}\}\\
&=\{A\in \operatorname{End}(\mathbb{R}_{b}^{n}): A^{\star_L}=-A\}=\{A\in \operatorname{End}(\mathbb{R}_{b}^{n}): A^{\star_R}=-A\}.
\end{align*}
\begin{remark} 
\begin{itemize}
\item[(i)] If $b$ is the canonical inner product on $\mathbb{R}^n$, then $G_b=O_n(\mathbb{R})$ is the orthogonal group.
\item[(ii)] If $b$ is the Minkowski inner product on $\mathbb{R}^{n}$, then $G_b=O_{n-1,1}(\mathbb{R})$ is the (full) Lorentz group.
\item[(iii)] If $b$ is the canonical symplectic structure on $\mathbb{R}^{n}$, $n\in 2\mathbb{N}$, then $G_b=Sp_{n}(\mathbb{R})$ is the symplectic group.
\end{itemize}
\end{remark}

\section{Invariant and equivariant maps}

The main purpose of this section is to analyze the compatibility between the $H-$invariant smooth real functions and the $H$-equivariant smooth vector fields on $\mathbb{R}^{n}_{b}$, where $H$ is a subgroup of $G_b$, and $b$ is an arbitrary fixed geometric structure on $\mathbb{R}^n$. In order to do that we shall need the definition of two operators naturally associated to the geometric structure $b$, which generalize the notion of gradient vector field from the Euclidean geometry, Hamiltonian vector field from the symplectic geometry, and Minkowski gradient vector field from the Minkowski geometry. More precisely, we recall the following definition from \cite{TDR}.

\begin{definition} \cite{TDR}
Let $b$ be a geometric structure on $\mathbb{R}^n$ and $\Omega\subseteq\mathbb{R}^{n}_{b}$ be an open set. Then for each $f\in\mathcal{C}^{1}(\Omega,\mathbb{R})$, the left--gradient of $f$, denoted by $\nabla^{L}_{b}f$, is the vector field uniquely defined by the relation
$$
b(\nabla^{L}_{b}f(\mathbf{x}),\mathbf{v})=\mathrm{d}f(\mathbf{x})\cdot \mathbf{v}, ~ \forall \mathbf{x}\in \Omega,\forall\mathbf{v}\in T_{\mathbf{x}}\Omega\cong \mathbb{R}^{n}_{b}.
$$
Similarly, the right--gradient of $f$, denoted by $\nabla^{R}_{b}f$, is the vector field uniquely defined by the relation
$$
b(\mathbf{v},\nabla^{R}_{b}f(\mathbf{x}))=\mathrm{d}f(\mathbf{x})\cdot \mathbf{v}, ~ \forall \mathbf{x}\in \Omega,\forall\mathbf{v}\in T_{\mathbf{x}}\Omega\cong \mathbb{R}^{n}_{b}.
$$
\end{definition}
The relation between $\nabla^{L}_{b}$, $\nabla^{R}_{b}$ in the case of a general geometric structure $b$ is provided by the following result from \cite{TDR}.
\begin{proposition}\cite{TDR}\label{rlgrd}
Let $b$ be a geometric structure on $\mathbb{R}^{n}$, and let $(b,B)$ be the associated geometric pair. Then for any fixed open set $\Omega\subseteq\mathbb{R}^{n}_{b}$, and for every $f\in\mathcal{C}^{1}(\Omega,\mathbb{R})$, we have that $\nabla^{L}_{b}f=B^{\top}\nabla f$, $\nabla^{R}_{b}f=B \nabla f$, and $\nabla^{L}_{b}f=B^{\top}B^{-1}\nabla^{R}_{b}f$, where $\nabla$ stands for the classical gradient operator.
\end{proposition}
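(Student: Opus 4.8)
The plan is to unwind each of the three claimed identities directly from the defining relations of $\nabla^L_b$ and $\nabla^R_b$, using the representation $b(\mathbf{x},\mathbf{y})=\langle\mathbf{x},B^{-1}\mathbf{y}\rangle$ from \eqref{use}, and then reading off the classical gradient via the identity $\mathrm{d}f(\mathbf{x})\cdot\mathbf{v}=\langle\nabla f(\mathbf{x}),\mathbf{v}\rangle$. First I would treat $\nabla^L_b f$. Starting from the defining relation $b(\nabla^L_b f(\mathbf{x}),\mathbf{v})=\mathrm{d}f(\mathbf{x})\cdot\mathbf{v}$ for all $\mathbf{v}$, I would substitute \eqref{use} on the left to get $\langle\nabla^L_b f(\mathbf{x}),B^{-1}\mathbf{v}\rangle=\langle\nabla f(\mathbf{x}),\mathbf{v}\rangle$, and move $B^{-1}$ across using its Euclidean adjoint, yielding $\langle(B^{-1})^\top\nabla^L_b f(\mathbf{x}),\mathbf{v}\rangle=\langle\nabla f(\mathbf{x}),\mathbf{v}\rangle$ for all $\mathbf{v}$. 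Non-degeneracy of $\langle\cdot,\cdot\rangle$ then forces $(B^{-1})^\top\nabla^L_b f=\nabla f$, i.e. $(B^\top)^{-1}\nabla^L_b f=\nabla f$, which rearranges to $\nabla^L_b f=B^\top\nabla f$.

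Next I would handle $\nabla^R_b f$ by the same mechanism, but being careful about the argument order in $b$. The defining relation is $b(\mathbf{v},\nabla^R_b f(\mathbf{x}))=\mathrm{d}f(\mathbf{x})\cdot\mathbf{v}$, so inserting \eqref{use} gives $\langle\mathbf{v},B^{-1}\nabla^R_b f(\mathbf{x})\rangle=\langle\nabla f(\mathbf{x}),\mathbf{v}\rangle=\langle\mathbf{v},\nabla f(\mathbf{x})\rangle$ for all $\mathbf{v}$, using the symmetry of the Euclidean inner product. Non-degeneracy immediately yields $B^{-1}\nabla^R_b f=\nabla f$, hence $\nabla^R_b f=B\nabla f$. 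The third identity is then purely formal: from $\nabla^R_b f=B\nabla f$ we get $\nabla f=B^{-1}\nabla^R_b f$, and substituting this into $\nabla^L_b f=B^\top\nabla f$ gives $\nabla^L_b f=B^\top B^{-1}\nabla^R_b f$.

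I do not anticipate a genuine obstacle here; the proof is a routine transcription once \eqref{use} is in hand. The only point requiring mild care is bookkeeping of the slot order in the right-gradient case, since $b$ need not be symmetric — one must place the $B^{-1}$ factor on the correct argument and invoke the symmetry of $\langle\cdot,\cdot\rangle$ (not of $b$) to compare with $\nabla f$. A secondary detail worth stating explicitly is why these vector fields exist and are unique in the first place: this follows because $\mathbf{v}\mapsto\mathrm{d}f(\mathbf{x})\cdot\mathbf{v}$ is a linear functional and $b$ is non-degenerate, so each defining relation determines $\nabla^L_b f(\mathbf{x})$ and $\nabla^R_b f(\mathbf{x})$ uniquely; but since the definition already asserts this, I would simply cite it and proceed with the three computations above.
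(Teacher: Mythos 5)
Your proof is correct. Note that the paper does not actually prove this proposition --- it is imported from \cite{TDR} as a cited result --- but your argument (substitute the representation \eqref{use}, transfer $B^{-1}$ across the Euclidean inner product via the transpose, and invoke non-degeneracy of $\langle\cdot,\cdot\rangle$) is the standard one and is exactly the technique the paper itself uses in the analogous proof of Proposition \ref{pimp}, including your correctly flagged point that in the right-gradient case one uses symmetry of $\langle\cdot,\cdot\rangle$ rather than any symmetry of $b$.
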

Next we give a natural compatibility between the group $G_b$ and the operators $\nabla^{L}_{b}$ and $\nabla^{R}_{b}$. In order to do that let us recall first the definition of invariant maps and equivariant vector fields. More precisely, let $H$ be an arbitrary fixed subgroup of $G_b$. Then a real valued function $f:\mathbb{R}^{n}_b\rightarrow \mathbb{R}$ is called $H-$invariant if 
$$
f(A\mathbf{x})=f(\mathbf{x}), ~\forall\mathbf{x}\in\mathbb{R}^{n}_b, ~\forall A\in H.
$$
Similarly, a vector field $F:\mathbb{R}^{n}_b\rightarrow\mathbb{R}^{n}_b$ is called $H-$equivariant if 
$$
F(A\mathbf{x})=AF(\mathbf{x}), ~\forall\mathbf{x}\in\mathbb{R}^{n}_b, ~\forall A\in H.
$$
The following result presents the equivariance of the left/right--gradient of an invariant continuously differentiable scalar function.
\begin{theorem}\label{timp1}
Let $b$ be a geometric structure on $\mathbb{R}^n$ and let $H$ be a subgroup of $G_b$. Then for any $H-$invariant function $f\in\mathcal{C}^{1}(\mathbb{R}^{n}_b,\mathbb{R})$, the vector fields $\nabla^{L}_{b}f$, $\nabla^{R}_{b}f$ are $H-$equivariant.
\end{theorem}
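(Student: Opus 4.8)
The plan is to prove the statement directly from the defining relation of the left/right-gradient, exploiting the characterizing property of $G_b$ together with the nondegeneracy of $b$. Since $b$ is nondegenerate, a vector is completely determined by its $b$-pairings against all test vectors; so to establish $\nabla^{L}_{b}f(A\mathbf{x})=A\nabla^{L}_{b}f(\mathbf{x})$ for a fixed $A\in H\subseteq G_b$ it suffices to verify that $b(\nabla^{L}_{b}f(A\mathbf{x}),\mathbf{v})=b(A\nabla^{L}_{b}f(\mathbf{x}),\mathbf{v})$ for every $\mathbf{v}\in\mathbb{R}^{n}_{b}$.

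First I would rewrite the left-hand side using the definition of the left-gradient at the point $A\mathbf{x}$, obtaining $b(\nabla^{L}_{b}f(A\mathbf{x}),\mathbf{v})=\mathrm{d}f(A\mathbf{x})\cdot\mathbf{v}$. For the right-hand side I would substitute $\mathbf{v}=A\mathbf{w}$, which is legitimate since $A$ is invertible (being an element of $G_b\leq\operatorname{Aut}(\mathbb{R}^{n}_{b})$), and then use the fact that $A$ preserves $b$, namely $b(A\mathbf{u},A\mathbf{w})=b(\mathbf{u},\mathbf{w})$, to get $b(A\nabla^{L}_{b}f(\mathbf{x}),A\mathbf{w})=b(\nabla^{L}_{b}f(\mathbf{x}),\mathbf{w})=\mathrm{d}f(\mathbf{x})\cdot\mathbf{w}$.

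The crux is then to match $\mathrm{d}f(A\mathbf{x})\cdot\mathbf{v}$ with $\mathrm{d}f(\mathbf{x})\cdot\mathbf{w}$ when $\mathbf{v}=A\mathbf{w}$. This comes from differentiating the $H$-invariance identity $f(A\mathbf{x})=f(\mathbf{x})$: applying the chain rule to $\mathbf{x}\mapsto f(A\mathbf{x})$ gives $\mathrm{d}f(A\mathbf{x})\cdot A\mathbf{w}=\mathrm{d}f(\mathbf{x})\cdot\mathbf{w}$ for all $\mathbf{w}$. Substituting back shows that both sides of the desired identity equal $\mathrm{d}f(A\mathbf{x})\cdot\mathbf{v}$, and the nondegeneracy of $b$ closes the left-gradient case. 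The right-gradient case is entirely symmetric: one starts from $b(\mathbf{v},\nabla^{R}_{b}f(\mathbf{x}))=\mathrm{d}f(\mathbf{x})\cdot\mathbf{v}$ and moves $A$ through the first slot of $b$ instead of the second.

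As a cross-check I could also route the argument through Proposition \ref{rlgrd}, writing $\nabla^{L}_{b}f=B^{\top}\nabla f$ and $\nabla^{R}_{b}f=B\nabla f$ and reducing everything to the classical gradient. Differentiating $f(A\mathbf{x})=f(\mathbf{x})$ gives $\nabla f(A\mathbf{x})=(A^{\top})^{-1}\nabla f(\mathbf{x})$, and the group relation $ABA^{\top}=B$ (equivalently its transpose $AB^{\top}A^{\top}=B^{\top}$) yields precisely the conjugation identities $AB^{\top}=B^{\top}(A^{\top})^{-1}$ and $AB=B(A^{\top})^{-1}$ needed to intertwine $A$ with $B^{\top}$ and with $B$. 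I expect the only delicate point in either route to be the bookkeeping of where $A$ and $A^{\top}$ sit — correctly differentiating the invariance relation via the chain rule so that $A$ lands on the test vector, and then invoking nondegeneracy of $b$ to pass from equality of all $b$-pairings to equality of the vector fields themselves.
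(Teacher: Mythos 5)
Your proposal is correct, and its main route is genuinely different from the paper's. The paper proves Theorem \ref{timp1} by passing through the geometric pair $(b,B)$: it invokes Proposition \ref{rlgrd} to write $\nabla^{L}_{b}f=B^{\top}\nabla f$ and $\nabla^{R}_{b}f=B\nabla f$, differentiates the invariance identity to get $A^{\top}\nabla f(A\mathbf{x})=\nabla f(\mathbf{x})$, and then uses the matrix characterization $G_b=\{A:\,ABA^{\top}=B\}$ to conjugate $A$ past $B^{\top}$ (respectively $B$) — which is precisely your ``cross-check'' route, so your fallback reproduces the paper's argument essentially verbatim. Your primary argument, by contrast, is coordinate-free: it never touches $B$ or the Euclidean reference structure, working instead from the defining relation $b(\nabla^{L}_{b}f(\mathbf{x}),\mathbf{v})=\mathrm{d}f(\mathbf{x})\cdot\mathbf{v}$, the $b$-preserving property $b(A\mathbf{u},A\mathbf{w})=b(\mathbf{u},\mathbf{w})$ that defines $G_b$, the chain rule applied to $f(A\mathbf{x})=f(\mathbf{x})$, and nondegeneracy of $b$ to upgrade equality of pairings to equality of vectors. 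All the ingredients you rely on are legitimate: invertibility of $A\in G_b$ is established in the paper (each $A\in G_b$ satisfies $A^{-1}=A^{\star_L}$), and in finite dimension nondegeneracy of $b$ in the slot you need (first slot for the left-gradient, second for the right) follows from nondegeneracy of the form. What each approach buys: yours is shorter in bookkeeping, makes the conceptual reason transparent ($A$ preserves both $b$ and $f$, hence must intertwine anything defined from them), and would generalize verbatim to any nondegenerate pairing without a distinguished inner product; the paper's matrix computation, while heavier, exercises the identity $ABA^{\top}=B$ in exactly the form that is reused later (e.g.\ in the compatibility of $\nabla^{L}_{b},\nabla^{R}_{b}$ with the actions $\tau,\tilde{\tau}$ and in the equivariance of $\Delta_b$), so it fits the paper's overall $(b,B)$-based machinery.
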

\begin{proof}
From the $H-$invariance of $f$, i.e. $f(A\mathbf{x})=f(\mathbf{x}), ~\forall \mathbf{x}\in\mathbb{R}^{n}_{b}, \forall A\in H$, we get that
\begin{equation}\label{grimp}
A^{\top}\nabla f(A\mathbf{x})=\nabla f(\mathbf{x}), ~\forall \mathbf{x}\in\mathbb{R}^{n}_{b}, \forall A\in H.
\end{equation}
In order to prove the $H-$equivariance of $\nabla^{L}_{b}f$, recall from Proposition \ref{rlgrd} that $\nabla^{L}_{b}f=B^{\top}\nabla f$. Hence the relation \eqref{grimp} becomes
\begin{equation*}
A^{\top}(B^{\top})^{-1}\nabla^{L}_{b} f(A\mathbf{x})=(B^{\top})^{-1}\nabla^{L}_{b} f(\mathbf{x}), ~\forall \mathbf{x}\in\mathbb{R}^{n}_{b}, \forall A\in H.
\end{equation*}
which is equivalent to
\begin{equation}\label{eimp}
\nabla^{L}_{b} f(A\mathbf{x})=B^{\top}(A^{\top})^{-1}(B^{\top})^{-1}\nabla^{L}_{b} f(\mathbf{x}), ~\forall \mathbf{x}\in\mathbb{R}^{n}_{b}, \forall A\in H.
\end{equation}
Since every $A\in H \leq  G_b=\{A\in\operatorname{Aut(\mathbb{R}^{n}_{b})}:~ ABA^{\top}=B\}$ verifies the following equivalences
\begin{align*}
ABA^{\top}=B \Leftrightarrow B^{-1}AB=(A^{\top})^{-1}\Leftrightarrow B^{\top}A^{\top}(B^{\top})^{-1}=A^{-1}\Leftrightarrow B^{\top}(A^{\top})^{-1}(B^{\top})^{-1}=A,
\end{align*}
the relation \eqref{eimp} becomes
\begin{equation*}
\nabla^{L}_{b} f(A\mathbf{x})=A \nabla^{L}_{b} f(\mathbf{x}), ~\forall \mathbf{x}\in\mathbb{R}^{n}_{b}, \forall A\in H,
\end{equation*}
which means that $\nabla^{L}_{b} f$ is an $H-$equivariant vector field.

In order to prove the $H-$equivariance of $\nabla^{R}_{b}f$, recall from Proposition \ref{rlgrd} that $\nabla^{R}_{b}f=B\nabla f$. Hence the relation \eqref{grimp} becomes
\begin{equation*}
A^{\top}B^{-1}\nabla^{R}_{b} f(A\mathbf{x})=B^{-1}\nabla^{R}_{b} f(\mathbf{x}), ~\forall \mathbf{x}\in\mathbb{R}^{n}_{b}, \forall A\in H.
\end{equation*}
which is equivalent to
\begin{equation}\label{eimpq}
\nabla^{R}_{b} f(A\mathbf{x})=B(A^{\top})^{-1}B^{-1}\nabla^{R}_{b} f(\mathbf{x}), ~\forall \mathbf{x}\in\mathbb{R}^{n}_{b}, \forall A\in H.
\end{equation}
Since every $A\in H \leq  G_b=\{A\in\operatorname{Aut(\mathbb{R}^{n}_{b})}:~ ABA^{\top}=B\}$ verifies the following equivalences
\begin{align*}
ABA^{\top}=B \Leftrightarrow  AB=B(A^{\top})^{-1} \Leftrightarrow B(A^{\top})^{-1}B^{-1}=A,
\end{align*}
the relation \eqref{eimpq} becomes
\begin{equation*}
\nabla^{R}_{b} f(A\mathbf{x})=A \nabla^{R}_{b} f(\mathbf{x}), ~\forall \mathbf{x}\in\mathbb{R}^{n}_{b}, \forall A\in H,
\end{equation*}
which means that $\nabla^{R}_{b} f$ is an $H-$equivariant vector field.
\end{proof}
\begin{remark}
The conclusion of Theorem \ref{timp1} holds true for any $H-$invariant function $f\in\mathcal{C}^{1}(\Omega,\mathbb{R})$, where $\Omega\subseteq \mathbb{R}^{n}_b$ is open and $H-$invariant.
\end{remark}
The following result gives a correspondence between smooth equivariant vector fields on $\mathbb{R}^{n}_{b}$ and smooth invariant scalar functions on $\mathbb{R}^{n}_{b}\times\mathbb{R}^{n}_{b}$.
\begin{theorem}
Let $b$ be a geometric structure on $\mathbb{R}^n$ and $H$ be a subgroup of $G_b$. Then there exists a correspondence between smooth $H-$equivariant vector fields on $\mathbb{R}^{n}_{b}$ and smooth $H-$invariant real valued functions defined on $\mathbb{R}^{n}_{b}\times \mathbb{R}^{n}_{b}$, with respect to the diagonal action of $H$ on $\mathbb{R}^{n}_{b}\times \mathbb{R}^{n}_{b}$.
\end{theorem}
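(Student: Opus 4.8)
The plan is to realize the correspondence through the bilinear pairing supplied by the geometric structure. Concretely, to an $H$-equivariant vector field $F$ I would associate the scalar function $\Phi_F$ on $\mathbb{R}^{n}_{b}\times\mathbb{R}^{n}_{b}$ defined by $\Phi_F(\mathbf{x},\mathbf{y})=b(F(\mathbf{x}),\mathbf{y})$, and conversely, to a smooth $H$-invariant function $\Phi$ I would associate the vector field $F_\Phi(\mathbf{x}):=\nabla^{L}_{b,\mathbf{y}}\Phi(\mathbf{x},\mathbf{y})\big|_{\mathbf{y}=\mathbf{x}}$, where $\nabla^{L}_{b,\mathbf{y}}$ denotes the left-gradient taken in the second variable. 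The first task is to check that each assignment lands in the correct class of objects; the second is to show that, after restricting the target of the second map appropriately, the two assignments are mutually inverse.

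For the forward map, smoothness of $\Phi_F$ is immediate, since it is the composition of the smooth map $\mathbf{x}\mapsto F(\mathbf{x})$ with the fixed bilinear form $b$. Its $H$-invariance under the diagonal action is where the group $G_b$ enters: for $A\in H\leq G_b$ one computes $\Phi_F(A\mathbf{x},A\mathbf{y})=b(F(A\mathbf{x}),A\mathbf{y})=b(AF(\mathbf{x}),A\mathbf{y})=b(F(\mathbf{x}),\mathbf{y})$, using the equivariance $F(A\mathbf{x})=AF(\mathbf{x})$ and the defining property $b(A\mathbf{u},A\mathbf{v})=b(\mathbf{u},\mathbf{v})$ of $G_b$. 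Thus $\Phi_F$ is $H$-invariant, as required.

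For the backward map, smoothness of $F_\Phi$ follows from Proposition \ref{rlgrd}, since $\nabla^{L}_{b,\mathbf{y}}\Phi=B^{\top}\nabla_{\mathbf{y}}\Phi$ is smooth and restriction to the diagonal preserves smoothness. For the $H$-equivariance I would mimic the argument of Theorem \ref{timp1}: differentiating the invariance relation $\Phi(A\mathbf{x},A\mathbf{y})=\Phi(\mathbf{x},\mathbf{y})$ in the second variable yields $(\mathrm{d}_{\mathbf{y}}\Phi)(A\mathbf{x},A\mathbf{y})\cdot A\mathbf{v}=(\mathrm{d}_{\mathbf{y}}\Phi)(\mathbf{x},\mathbf{y})\cdot\mathbf{v}$, which after passing to left-gradients and using the identity $b(\mathbf{u},A\mathbf{v})=b(A^{-1}\mathbf{u},\mathbf{v})$ valid on $G_b$ gives $\nabla^{L}_{b,\mathbf{y}}\Phi(A\mathbf{x},A\mathbf{y})=A\,\nabla^{L}_{b,\mathbf{y}}\Phi(\mathbf{x},\mathbf{y})$. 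Restricting to the diagonal $\mathbf{y}=\mathbf{x}$ then yields $F_\Phi(A\mathbf{x})=AF_\Phi(\mathbf{x})$, i.e. $F_\Phi$ is $H$-equivariant.

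Finally I would reconcile the two maps. The composition $F\mapsto\Phi_F\mapsto F_{\Phi_F}$ recovers $F$: since $\Phi_F$ is linear in $\mathbf{y}$, its left-gradient in $\mathbf{y}$ is the field $\mathbf{x}\mapsto F(\mathbf{x})$, constant in $\mathbf{y}$, by the defining relation of $\nabla^{L}_b$ together with the nondegeneracy of $b$; restricting to the diagonal returns $F$. This already shows the forward map is injective with the backward map as a left inverse, which establishes the asserted correspondence. I expect the genuine bijectivity to be the main obstacle: the backward map is surjective but collapses the dependence of $\Phi$ on the off-diagonal and on the directions in which $\Phi$ is nonlinear in $\mathbf{y}$, so $F_\Phi$ records only the linear part of $\Phi$ along the diagonal. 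A clean bijection is therefore obtained precisely upon restricting to those invariant functions that are linear in the second argument, where nondegeneracy of $b$ inverts the pairing fibrewise; the delicate point there is to verify that this fibrewise inversion produces a \emph{smooth} vector field and that linearity in $\mathbf{y}$ is preserved by the diagonal $H$-action.
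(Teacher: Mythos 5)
Your proposal is correct and follows essentially the same route as the paper: the forward map is the identical pairing $\Phi_F(\mathbf{x},\mathbf{y})=b(F(\mathbf{x}),\mathbf{y})$, and the backward map is the left-gradient in the second variable, the only difference being that you evaluate on the diagonal $\mathbf{y}=\mathbf{x}$ where the paper evaluates at $\mathbf{y}=\mathbf{0}$ (both choices work, since the origin and the diagonal are each preserved by the diagonal linear action). Your closing discussion of when the correspondence is an honest bijection goes beyond what the paper establishes---the paper, like you, only verifies that $F\mapsto\Phi_F\mapsto F$ is the identity---so it is a clarification rather than a gap.
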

\begin{proof}
Let $F:\mathbb{R}^{n}_{b}\rightarrow \mathbb{R}^{n}_{b}$ be an $H-$equivariant smooth function. We prove that the function $f:\mathbb{R}^{n}_{b}\times \mathbb{R}^{n}_{b}\rightarrow \mathbb{R}$ given by 
\begin{equation*}
f(\mathbf{x},\mathbf{y})=b(F(\mathbf{x}),\mathbf{y}), ~\forall \mathbf{x},\mathbf{y}\in\mathbb{R}^{n}_{b},
\end{equation*}
is a smooth $H-$invariant function with respect to the diagonal action of $H$ on $\mathbb{R}^{n}_{b}\times \mathbb{R}^{n}_{b}$. Indeed, for each $A\in H\leq G_b=\{A\in \operatorname{End}(\mathbb{R}_{b}^{n}): b(A\mathbf{x},A\mathbf{y})=b(\mathbf{x},\mathbf{y}),\forall \mathbf{x},\mathbf{y}\in\mathbb{R}^{n}_{b}\}$ we have that
\begin{align*}
f(A\mathbf{x},A\mathbf{y})=b(F(A\mathbf{x}),A\mathbf{y})=b(AF(\mathbf{x}),A\mathbf{y})=b(F(\mathbf{x}),\mathbf{y})=f(\mathbf{x},\mathbf{y}), ~\forall \mathbf{x},\mathbf{y}\in\mathbb{R}^{n}_{b}.
\end{align*}
For the converse part, let $f:\mathbb{R}^{n}_{b}\times \mathbb{R}^{n}_{b}\rightarrow \mathbb{R}$ be an $H-$invariant smooth function with respect to the diagonal action of $H$ on $\mathbb{R}^{n}_{b}\times \mathbb{R}^{n}_{b}$. From Theorem \ref{timp1} it follows that the vector fields $F^{L}(\mathbf{x}):=\nabla^{L}_{b;\mathbf{y}}f(\mathbf{x},\mathbf{0}), F^{R}(\mathbf{y}):=\nabla^{R}_{b;\mathbf{x}}f(\mathbf{0},\mathbf{y}),~\forall\mathbf{x},\mathbf{y}\in\mathbb{R}^{n}_{b}$, are both $H-$equivariant.
\end{proof}

\begin{remark}\label{rem1}
Note that regardless invariance properties, for any given smooth vector field $F$ on $\mathbb{R}^{n}_{b}$, the function defined by the relation 
\begin{equation}\label{rq1}
f(\mathbf{x},\mathbf{y})=b(F(\mathbf{x}),\mathbf{y}), ~\forall \mathbf{x},\mathbf{y}\in\mathbb{R}^{n}_{b},
\end{equation}
is smooth and the defining relation \eqref{rq1} implies that $F(\mathbf{x})=\nabla^{L}_{b;\mathbf{y}}f(\mathbf{x},\mathbf{0}), ~ \forall \mathbf{x}\in\mathbb{R}^{n}_{b}$. 
Indeed, from Proposition \ref{rlgrd}, Proposition \ref{pimp} and relation \eqref{rq1}, it follows that for any $\mathbf{x},\mathbf{y}\in\mathbb{R}^{n}_{b}$ 
\begin{align*}
b(\nabla^{L}_{b;\mathbf{y}}f(\mathbf{x},\mathbf{0}),\mathbf{y})&=b(B^{\top}\nabla_{\mathbf{y}}f(\mathbf{x},\mathbf{0}),\mathbf{y})=b(B^{\star_{L}}\nabla_{\mathbf{y}}f(\mathbf{x},\mathbf{0}),\mathbf{y})=b(\nabla_{\mathbf{y}}f(\mathbf{x},\mathbf{0}),B\mathbf{y})\\
&=\langle \nabla_{\mathbf{y}}f(\mathbf{x},\mathbf{0}),\mathbf{y}\rangle=f(\mathbf{x},\mathbf{y}).
\end{align*}
Similarly, for any given smooth vector field $F$ on $\mathbb{R}^{n}_{b}$, the function defined by the relation
\begin{equation}\label{rq2}
g(\mathbf{x},\mathbf{y})=b(\mathbf{x},F(\mathbf{y})), ~\forall \mathbf{x},\mathbf{y}\in\mathbb{R}^{n}_{b},
\end{equation}
is smooth and the defining relation \eqref{rq2} implies that $F(\mathbf{y})=\nabla^{R}_{b;\mathbf{x}}g(\mathbf{0},\mathbf{y}), ~ \forall \mathbf{y}\in\mathbb{R}^{n}_{b}$.
\end{remark}
Let us now state the main result of this section which provides explicitly two sets of generators for the module of smooth equivaraint vector fields on $\mathbb{R}^{n}_{b}$ in terms of left/right--gradient vector fields.   
\begin{theorem}
Let $b$ be a geometric structure on $\mathbb{R}^n$ and $H$ a subgroup of $G_b$. Assume that the ring $\mathcal{C}^{\infty}(\mathbb{R}^{n}_{b}\times \mathbb{R}^{n}_{b},\mathbb{R})^{H}$ of $H-$invariant smooth real valued functions defined on $\mathbb{R}^{n}_{b}\times \mathbb{R}^{n}_{b}$ (with respect to the diagonal action of $H$ on $\mathbb{R}^{n}_{b}\times \mathbb{R}^{n}_{b}$) is generated by some finite set $\{u_1,\dots,u_p\}$, in the sense that for any given $f\in\mathcal{C}^{\infty}(\mathbb{R}^{n}_{b}\times \mathbb{R}^{n}_{b},\mathbb{R})^{H}$ there exists a smooth function $\tilde{f}\in\mathcal{C}^{\infty}(\mathbb{R}^{p},\mathbb{R})$ such that $f=\tilde{f}\circ(u_1,\dots,u_p)$. Then $\{\nabla^{L}_{b;\mathbf{y}}u_1(\mathbf{x},\mathbf{0}),\dots,\nabla^{L}_{b;\mathbf{y}}u_p(\mathbf{x},\mathbf{0})\}$, $\{\nabla^{R}_{b;\mathbf{x}}u_1(\mathbf{0},\mathbf{y}),\dots,\nabla^{R}_{b;\mathbf{x}}u_p(\mathbf{0},\mathbf{y})\}$, are two sets of generators of the module of smooth $H-$equivariant vector fields on $\mathbb{R}^{n}_{b}$, over the ring $\mathcal{C}^{\infty}(\mathbb{R}^{n}_{b}\times \mathbb{R}^{n}_{b},\mathbb{R})^{H}$.
\end{theorem}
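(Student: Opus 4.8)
The plan is to combine the correspondence of the preceding theorem with the chain rule, treating the two generating sets in parallel. Write $R:=\mathcal{C}^{\infty}(\mathbb{R}^{n}_{b}\times \mathbb{R}^{n}_{b},\mathbb{R})^{H}$ for the invariant ring. First I would take an arbitrary smooth $H$-equivariant vector field $F$ on $\mathbb{R}^{n}_{b}$ and attach to it the function $f(\mathbf{x},\mathbf{y})=b(F(\mathbf{x}),\mathbf{y})$. By the preceding theorem $f$ is smooth and $H$-invariant, so $f\in R$, while Remark \ref{rem1} recovers $F$ as $F(\mathbf{x})=\nabla^{L}_{b;\mathbf{y}}f(\mathbf{x},\mathbf{0})$. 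This already exhibits every element of the module as a left-gradient of an invariant function, and it remains only to resolve $f$ along the given generators.

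Next I would invoke the finite-generation hypothesis to write $f=\tilde{f}\circ(u_1,\dots,u_p)$ with $\tilde{f}\in\mathcal{C}^{\infty}(\mathbb{R}^{p},\mathbb{R})$, and differentiate in $\mathbf{y}$. Since Proposition \ref{rlgrd} gives $\nabla^{L}_{b;\mathbf{y}}=B^{\top}\nabla_{\mathbf{y}}$ with $B^{\top}$ a fixed linear map, the ordinary chain rule for $\nabla_{\mathbf{y}}$ transports through $B^{\top}$ to yield
\begin{equation*}
\nabla^{L}_{b;\mathbf{y}}f(\mathbf{x},\mathbf{y})=\sum_{i=1}^{p}\frac{\partial \tilde{f}}{\partial w_i}\big(u_1(\mathbf{x},\mathbf{y}),\dots,u_p(\mathbf{x},\mathbf{y})\big)\,\nabla^{L}_{b;\mathbf{y}}u_i(\mathbf{x},\mathbf{y}).
\end{equation*}
Evaluating at $\mathbf{y}=\mathbf{0}$ and writing $c_i(\mathbf{x})=\tfrac{\partial \tilde{f}}{\partial w_i}(u_1(\mathbf{x},\mathbf{0}),\dots,u_p(\mathbf{x},\mathbf{0}))$ gives $F(\mathbf{x})=\sum_{i=1}^{p}c_i(\mathbf{x})\,\nabla^{L}_{b;\mathbf{y}}u_i(\mathbf{x},\mathbf{0})$, which is the desired expansion along the proposed generating set.

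The step that needs care, and which I regard as the main point rather than the routine chain rule, is the module bookkeeping. Each proposed generator $\nabla^{L}_{b;\mathbf{y}}u_i(\mathbf{x},\mathbf{0})$ is itself $H$-equivariant by the converse part of the preceding theorem applied to the invariant $u_i$, hence lies in the module. For the coefficients, I would observe that $C_i(\mathbf{x},\mathbf{y}):=\tfrac{\partial \tilde{f}}{\partial w_i}(u_1(\mathbf{x},\mathbf{y}),\dots,u_p(\mathbf{x},\mathbf{y}))$ is smooth and $H$-invariant on the product (the $u_j$ being invariant and the outer map a fixed smooth function), so $C_i\in R$; since the linear action fixes the origin, the restriction $\mathbf{x}\mapsto C_i(\mathbf{x},\mathbf{0})=c_i(\mathbf{x})$ is $H$-invariant on $\mathbb{R}^{n}_{b}$, and multiplying an invariant scalar by an equivariant field preserves equivariance. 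This is precisely the action of $R$ on the module through restriction to $\mathbf{y}=\mathbf{0}$, so the displayed identity exhibits $F$ as an $R$-linear combination of the generators.

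Finally I would run the mirror argument for the right-gradient set. Starting instead from $g(\mathbf{x},\mathbf{y})=b(\mathbf{x},F(\mathbf{y}))$, which Remark \ref{rem1} shows satisfies $F(\mathbf{y})=\nabla^{R}_{b;\mathbf{x}}g(\mathbf{0},\mathbf{y})$, the decomposition $g=\tilde{g}\circ(u_1,\dots,u_p)$ together with the chain rule for $\nabla^{R}_{b;\mathbf{x}}=B\nabla_{\mathbf{x}}$ yields $F(\mathbf{y})=\sum_{i=1}^{p}\tilde{c}_i(\mathbf{y})\,\nabla^{R}_{b;\mathbf{x}}u_i(\mathbf{0},\mathbf{y})$ with invariant coefficients $\tilde{c}_i(\mathbf{y})=\tfrac{\partial \tilde{g}}{\partial w_i}(u_1(\mathbf{0},\mathbf{y}),\dots,u_p(\mathbf{0},\mathbf{y}))$, proving that $\{\nabla^{R}_{b;\mathbf{x}}u_i(\mathbf{0},\mathbf{y})\}$ also generates the module. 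The only place the asymmetry of $b$ between left and right could cause trouble is the choice of which slot receives $F$; placing $F$ in the $\mathbf{y}$-slot forces the left-gradient and in the $\mathbf{x}$-slot forces the right-gradient, and Remark \ref{rem1} is exactly what guarantees both reconstructions.
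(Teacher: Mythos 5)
Your proof is correct and takes essentially the same route as the paper's own: the same auxiliary functions $f(\mathbf{x},\mathbf{y})=b(F(\mathbf{x}),\mathbf{y})$ and $g(\mathbf{x},\mathbf{y})=b(\mathbf{x},F(\mathbf{y}))$, the recovery of $F$ via Remark \ref{rem1}, the finite-generation hypothesis plus the chain rule, and the observation that the resulting coefficients are $H$-invariant. Your extra care with the module bookkeeping (equivariance of the generators and invariance of the coefficients restricted to $\mathbf{y}=\mathbf{0}$, resp. $\mathbf{x}=\mathbf{0}$) only makes explicit what the paper leaves implicit.
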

\begin{proof}
Let $F:\mathbb{R}^{n}_{b}\rightarrow\mathbb{R}^{n}_{b}$ be a smooth $H-$equivariant vector field. Then as the smooth function
\begin{equation}\label{ecuu1}
f(\mathbf{x},\mathbf{y})=b(F(\mathbf{x}),\mathbf{y}), ~\forall \mathbf{x},\mathbf{y}\in\mathbb{R}^{n}_{b},
\end{equation}
is $H-$invariant with respect to the diagonal action of $H$ on $\mathbb{R}^{n}_{b}\times \mathbb{R}^{n}_{b}$, there exists a smooth function $\tilde{f}:\mathbb{R}^{p}\rightarrow \mathbb{R}$ such that 
\begin{equation}\label{equu2}
f(\mathbf{x},\mathbf{y})=\tilde{f}(u_1(\mathbf{x},\mathbf{y}),\dots,u_p(\mathbf{x},\mathbf{y})), ~\forall \mathbf{x},\mathbf{y}\in\mathbb{R}^{n}_{b}.
\end{equation}
From the relation \eqref{ecuu1} and Remark \ref{rem1} it follows that 
\begin{equation}\label{equu3}
F(\mathbf{x})=\nabla^{L}_{b;\mathbf{y}}f(\mathbf{x},\mathbf{0}), \forall\mathbf{x}\in\mathbb{R}^{n}_{b}.
\end{equation}
Using the equalities \eqref{equu2} and \eqref{equu3} we get
\begin{align*}
F(\mathbf{x})=\sum_{i=1}^{p}\partial_{i}\tilde{f}(u_1(\mathbf{x},\mathbf{0}),\dots,u_p(\mathbf{x},\mathbf{0}))\cdot\nabla^{L}_{b;\mathbf{y}}u_i(\mathbf{x},\mathbf{0}),~ \forall\mathbf{x}\in\mathbb{R}^{n}_{b},
\end{align*}
and the first part of the conclusion follows taking into account that $$\partial_{i}\tilde{f}(u_1(\mathbf{x},\mathbf{0}),\dots,u_p(\mathbf{x},\mathbf{0}))\in\mathcal{C}^{\infty}(\mathbb{R}^{n}_{b}\times \mathbb{R}^{n}_{b},\mathbb{R})^{H}, \forall i\in\{1,\dots,p\}.$$
In order to prove the second part of the conclusion, replacing in \eqref{ecuu1} the function $f$ by the smooth $H-$invariant function $g$ given by
\begin{equation*}
g(\mathbf{x},\mathbf{y})=b(\mathbf{x},F(\mathbf{y})), ~\forall \mathbf{x},\mathbf{y}\in\mathbb{R}^{n}_{b},
\end{equation*}
we obtain that 
\begin{equation}\label{equu4}
F(\mathbf{y})=\nabla^{R}_{b;\mathbf{x}}g(\mathbf{0},\mathbf{y}), \forall\mathbf{y}\in\mathbb{R}^{n}_{b}.
\end{equation}
Using similar arguments as in the previous case, there exists a smooth function $\tilde{g}:\mathbb{R}^{p}\rightarrow \mathbb{R}$ such that 
\begin{equation*}
g(\mathbf{x},\mathbf{y})=\tilde{g}(u_1(\mathbf{x},\mathbf{y}),\dots,u_p(\mathbf{x},\mathbf{y})), ~\forall \mathbf{x},\mathbf{y}\in\mathbb{R}^{n}_{b},
\end{equation*}
and hence from \eqref{equu4} we get
\begin{align*}
F(\mathbf{y})=\sum_{i=1}^{p}\partial_{i}\tilde{g}(u_1(\mathbf{0},\mathbf{y}),\dots,u_p(\mathbf{0},\mathbf{y}))\cdot\nabla^{R}_{b;\mathbf{x}}u_i(\mathbf{0},\mathbf{y}),~ \forall\mathbf{y}\in\mathbb{R}^{n}_{b}.
\end{align*}
As $$\partial_{i}\tilde{g}(u_1(\mathbf{0},\mathbf{y}),\dots,u_p(\mathbf{0},\mathbf{y}))\in\mathcal{C}^{\infty}(\mathbb{R}^{n}_{b}\times \mathbb{R}^{n}_{b},\mathbb{R})^{H}, \forall i\in\{1,\dots,p\},$$ we obtain the second part of the conclusion.
\end{proof}

\section{Laplace operators associated to geometric structures}

The aim of this section is to introduce a Laplace-like operator naturally associated to any given geometric structure on $\mathbb{R}^n$. This operator generalizes the classical Laplace operator (in the case when the geometric structure is Euclidean) and also the d'Alembert operator (in the case when the geometric structure is Minkowski). 

Starting with the classical divergence operator together with the left/right--gradient operators, let us define first two Laplace-like operators naturally associated to any arbitrary given geometric structure on $\mathbb{R}^n$.
\begin{definition}\label{lap0}
Let $b$ be a geometric structure on $\mathbb{R}^n$ and $\Omega\subseteq\mathbb{R}^{n}_{b}$ be an open set. Then for each $f\in\mathcal{C}^{2}(\Omega,\mathbb{R})$, the left--Laplacian of $f$, is given by $\Delta ^{L}_{b}f:=\operatorname{div}(\nabla ^{L}_{b}f)$, and similarly, the right--Laplacian of $f$, is given by $\Delta ^{R}_{b}f:=\operatorname{div}(\nabla ^{R}_{b}f)$.
\end{definition}
Next we prove that for any geometric structure $b$ on $\mathbb{R}^n$, the left and right Laplace operators actually coincide, and consequently they define a unique geometric Laplace operator naturally associated to the geometric structure $b$, denoted by $\Delta_b$ and called the $b-$Laplacian.
\begin{proposition}\label{deltaa}
Let $b$ be a geometric structure on $\mathbb{R}^n$ and $\Omega\subseteq\mathbb{R}^{n}_{b}$ be an open set. Then $\Delta ^{L}_{b}f=\Delta ^{R}_{b}f=:\Delta_{b}f, ~\forall f\in\mathcal{C}^{2}(\Omega,\mathbb{R})$.
\end{proposition}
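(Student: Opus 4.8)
The plan is to reduce the claimed identity $\Delta^{L}_{b}f=\Delta^{R}_{b}f$ to a statement about the constant matrices $B^{\top}$ and $B$ coming from the geometric pair, and then to exploit the fact that the divergence of a linear transformation applied to a gradient is governed by the trace of a Hessian against that matrix. First I would invoke Proposition \ref{rlgrd} to write $\nabla^{L}_{b}f=B^{\top}\nabla f$ and $\nabla^{R}_{b}f=B\nabla f$, so that $\Delta^{L}_{b}f=\operatorname{div}(B^{\top}\nabla f)$ and $\Delta^{R}_{b}f=\operatorname{div}(B\nabla f)$. Since $B$ is a constant (position-independent) linear operator, the divergence passes through in a controlled way: for any constant matrix $M=(M_{ij})$ one has $\operatorname{div}(M\nabla f)=\sum_{i,j}M_{ij}\,\partial_i\partial_j f=\operatorname{tr}\!\big(M\,\operatorname{Hess}(f)\big)$, where $\operatorname{Hess}(f)=(\partial_i\partial_j f)$ is the Hessian of $f$.

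The key observation is then that the Hessian $\operatorname{Hess}(f)$ is a symmetric matrix, because $f\in\mathcal{C}^{2}(\Omega,\mathbb{R})$ and mixed partials commute (Schwarz's theorem). For any symmetric matrix $S$ and any matrix $M$ one has $\operatorname{tr}(M S)=\operatorname{tr}(M^{\top}S)$, since $\operatorname{tr}(MS)=\operatorname{tr}((MS)^{\top})=\operatorname{tr}(S^{\top}M^{\top})=\operatorname{tr}(S M^{\top})=\operatorname{tr}(M^{\top}S)$, using $S^{\top}=S$ and cyclicity of the trace. Applying this with $M=B^{\top}$ and $S=\operatorname{Hess}(f)$ gives
\begin{equation*}
\Delta^{L}_{b}f=\operatorname{tr}\!\big(B^{\top}\operatorname{Hess}(f)\big)=\operatorname{tr}\!\big((B^{\top})^{\top}\operatorname{Hess}(f)\big)=\operatorname{tr}\!\big(B\,\operatorname{Hess}(f)\big)=\Delta^{R}_{b}f,
\end{equation*}
which is exactly the desired equality, valid for every $f\in\mathcal{C}^{2}(\Omega,\mathbb{R})$.

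I do not expect a serious obstacle here: the whole argument hinges on two elementary facts, namely that $B$ is constant (so differentiation and the linear map commute cleanly) and that the Hessian is symmetric. The only point requiring a little care is the bookkeeping in $\operatorname{div}(M\nabla f)=\sum_{i,j}M_{ij}\partial_i\partial_j f$, where one must be consistent about whether $M_{ij}$ multiplies $\partial_i\partial_j f$ or $\partial_j\partial_i f$; but by symmetry of the Hessian this ambiguity is immaterial, which is precisely why the left and right Laplacians agree. An equivalent and perhaps cleaner route, avoiding index computations, would be to use Proposition \ref{rlgrd} in the form $\nabla^{L}_{b}f=B^{\top}B^{-1}\nabla^{R}_{b}f$ and reduce to showing $\operatorname{div}\!\big((B^{\top}B^{-1}-\operatorname{Id})\nabla^{R}_{b}f\big)=0$, but the trace-of-Hessian formulation above is the most transparent and is the one I would write up.
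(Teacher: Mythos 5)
Your proof is correct and follows essentially the same route as the paper: both start from Proposition \ref{rlgrd} to write $\Delta^{L}_{b}f=\operatorname{div}(B^{\top}\nabla f)$ and $\Delta^{R}_{b}f=\operatorname{div}(B\nabla f)$, and both reduce the equality to Schwarz's theorem on mixed partials. The only difference is presentational --- you package the index swap as the trace identity $\operatorname{tr}(B^{\top}\operatorname{Hess}(f))=\operatorname{tr}(B\operatorname{Hess}(f))$, whereas the paper carries out the same cancellation with explicit summation indices.
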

\begin{proof}
Recall from Proposition \ref{rlgrd} that $\nabla^{L}_{b}f=B^{\top}\nabla f$ and $\nabla^{R}_{b}f=B \nabla f$. Denoting by $b_{ij}$, $i,j\in\{1,\dots,n\}$ the entries of the matrix representing $B$ with respect to canonical basis, it follows that
\begin{align*}
\Delta ^{L}_{b}f&=\operatorname{div}(\nabla^{L}_{b}f)=\operatorname{div}(B^{\top}\nabla f)=\sum_{1\leq i,j\leq n}b_{ji}\dfrac{\partial^{2}f}{\partial x_i \partial x_j}=\sum_{1\leq i,j\leq n}b_{ji}\dfrac{\partial^{2}f}{\partial x_j\partial x_i}\\
&=\sum_{1\leq i,j\leq n}b_{ij}\dfrac{\partial^{2}f}{\partial x_i\partial x_j}=\operatorname{div}(B\nabla f)=\operatorname{div}(\nabla^{R}_{b}f)=\Delta ^{R}_{b}f.
\end{align*} 
\end{proof}

\begin{remark}
If $b$ is the canonical inner product on $\mathbb{R}^n$ then $\Delta_{b}=\Delta$, where $\Delta$ is the classical Laplace operator.

If $b$ is the Minkowski inner product on $\mathbb{R}^n$ then $\Delta_{b}=\square$, where $\square$ is the d'Alembert operator.

If $b$ is a skew-symmetric geometric structure on $\mathbb{R}^n$, $n\in 2\mathbb{N}$, (i.e., $b$ is a symplectic structure) and $\Omega\subseteq\mathbb{R}^{n}_{b}$ is an open set, then $\Delta_{b}f\equiv  0, ~\forall f\in\mathcal{C}^{2}(\Omega,\mathbb{R})$.
\end{remark}

The following result provides some natural properties of the $b-$Laplace operator. 
\begin{proposition}\label{pimpi}
Let $b$ be a geometric structure on $\mathbb{R}^n$ and $\Omega\subseteq\mathbb{R}^{n}_{b}$ an open set. Then the following identities hold
\begin{itemize}
\item[(i)] $\Delta_{b}(\lambda f+\mu g)=\lambda \Delta_{b} f+\mu \Delta_{b} g, ~ \forall f,g\in\mathcal{C}^{2}(\Omega,\mathbb{R}),~ \forall\lambda,\mu\in\mathbb{R},$
\item[(ii)] $\Delta_{b}(fg)=f\Delta_{b}g+g\Delta_{b}f+b(\nabla_{b}^{L}f,\nabla_{b}^{L}g)+b(\nabla_{b}^{L}g,\nabla_{b}^{L}f), ~ \forall f,g\in\mathcal{C}^{\infty}(\Omega,\mathbb{R}),$
\item[(iii)] $\Delta_{b}(fg)=f\Delta_{b}g+g\Delta_{b}f+b(\nabla_{b}^{R}f,\nabla_{b}^{R}g)+b(\nabla_{b}^{R}g,\nabla_{b}^{R}f), ~ \forall f,g\in\mathcal{C}^{\infty}(\Omega,\mathbb{R}).$
\end{itemize}
\end{proposition}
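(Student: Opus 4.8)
The plan is to reduce everything to the coordinate expression for $\Delta_b$ obtained in the proof of Proposition \ref{deltaa}, namely $\Delta_b h=\sum_{1\le i,j\le n}b_{ij}\,\partial^2 h/\partial x_i\partial x_j$ for $h\in\mathcal{C}^2(\Omega,\mathbb{R})$, where $b_{ij}$ are the entries of $B$ in the canonical basis. Part (i) is then immediate: $\Delta_b$ is a composition of the linear operators $\nabla$, multiplication by the fixed matrix $B^{\top}$ (or $B$), and $\operatorname{div}$, each of which is $\mathbb{R}$-linear, so $\Delta_b(\lambda f+\mu g)=\lambda\Delta_b f+\mu\Delta_b g$.

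For (ii) I would apply the ordinary Leibniz rule twice to the mixed partials. Writing out $\partial^2(fg)/\partial x_i\partial x_j$ produces four terms, and summing against $b_{ij}$ groups them as
$$\Delta_b(fg)=g\,\Delta_b f+f\,\Delta_b g+\sum_{i,j}b_{ij}\frac{\partial f}{\partial x_j}\frac{\partial g}{\partial x_i}+\sum_{i,j}b_{ij}\frac{\partial f}{\partial x_i}\frac{\partial g}{\partial x_j},$$
the first two terms arising from the pure second-order derivatives and the last two being the cross terms. It remains to recognise the two cross sums as $b(\nabla^{L}_{b}f,\nabla^{L}_{b}g)$ and $b(\nabla^{L}_{b}g,\nabla^{L}_{b}f)$. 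For this I would use $\nabla^{L}_{b}f=B^{\top}\nabla f$ from Proposition \ref{rlgrd} together with $b(\mathbf{x},\mathbf{y})=\langle\mathbf{x},B^{-1}\mathbf{y}\rangle$ from \eqref{use}, which gives $b(\nabla^{L}_{b}f,\nabla^{L}_{b}g)=\langle B^{\top}\nabla f,B^{-1}B^{\top}\nabla g\rangle=\langle\nabla f,B^{\top}\nabla g\rangle$; expanding this inner product in coordinates and relabelling the summation index yields exactly $\sum_{i,j}b_{ij}\,(\partial f/\partial x_j)(\partial g/\partial x_i)$, and the analogous computation for the pair $(g,f)$ produces the remaining cross sum.

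Part (iii) is handled identically, now starting from $\nabla^{R}_{b}f=B\nabla f$: the same use of \eqref{use} gives $b(\nabla^{R}_{b}f,\nabla^{R}_{b}g)=\langle B\nabla f,\nabla g\rangle$, which in coordinates reproduces the same two cross sums, so the right-gradient identity follows with no new input. The only point requiring genuine care is the index bookkeeping: since $b$ (equivalently $B$) need not be symmetric, neither cross sum individually equals a single term $b(\nabla^{L}_{b}f,\nabla^{L}_{b}g)$, and one must track the distinction between $b_{ij}$ and $b_{ji}$ and verify that the symmetric combination $b(\nabla^{L}_{b}f,\nabla^{L}_{b}g)+b(\nabla^{L}_{b}g,\nabla^{L}_{b}f)$ reconstitutes precisely the two cross terms. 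This is exactly why both bilinear terms appear in the statement rather than a single factor of two.
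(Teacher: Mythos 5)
Your proof is correct, and it takes a more computational route than the paper's. You work entirely in coordinates, starting from the formula $\Delta_b h=\sum_{1\le i,j\le n}b_{ij}\,\partial^{2}h/\partial x_i\partial x_j$ extracted from the proof of Proposition \ref{deltaa}, applying the ordinary Leibniz rule to the mixed partials of $fg$, and then matching the two cross sums with $b(\nabla^{L}_{b}f,\nabla^{L}_{b}g)$ and $b(\nabla^{L}_{b}g,\nabla^{L}_{b}f)$ via $\nabla^{L}_{b}f=B^{\top}\nabla f$ (Proposition \ref{rlgrd}) and $b(\mathbf{x},\mathbf{y})=\langle\mathbf{x},B^{-1}\mathbf{y}\rangle$ (relation \eqref{use}). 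The paper instead stays at the operator level: it expands $\Delta_b(fg)=\operatorname{div}(g\nabla^{L}_{b}f)+\operatorname{div}(f\nabla^{L}_{b}g)$, invokes the divergence identity $\operatorname{div}(hX)=\langle\nabla h,X\rangle+h\operatorname{div}X$, and then converts the resulting Euclidean pairings such as $\langle\nabla g,\nabla^{L}_{b}f\rangle$ into $b$-pairings by substituting $\nabla g=(B^{-1})^{\top}\nabla^{L}_{b}g$ and using \eqref{use}. These are the same computation packaged differently: yours is fully self-contained, needing nothing beyond the product rule for partial derivatives, while the paper's version avoids all index relabelling and keeps the gradients visible as geometric objects, which is what makes its item (iii) a one-line modification of item (ii); in your setup (iii) is equally immediate, since $b(\nabla^{R}_{b}f,\nabla^{R}_{b}g)=\langle B\nabla f,\nabla g\rangle=\langle\nabla f,B^{\top}\nabla g\rangle$ reproduces the identical cross sums.

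One phrasing slip in your closing remark: it is not true that ``neither cross sum individually equals a single term $b(\nabla^{L}_{b}f,\nabla^{L}_{b}g)$.'' As your own computation shows, each cross sum individually equals one of the two $b$-terms, namely $\sum_{i,j}b_{ij}\,(\partial f/\partial x_j)(\partial g/\partial x_i)=b(\nabla^{L}_{b}f,\nabla^{L}_{b}g)$ and $\sum_{i,j}b_{ij}\,(\partial f/\partial x_i)(\partial g/\partial x_j)=b(\nabla^{L}_{b}g,\nabla^{L}_{b}f)$. What fails for non-symmetric $b$ is only that these two terms coincide, i.e.\ their sum cannot be collapsed to $2\,b(\nabla^{L}_{b}f,\nabla^{L}_{b}g)$; that is indeed why the statement lists both terms, so your conclusion stands.
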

\begin{proof}
$(i)$  For every $f,g\in\mathcal{C}^{2}(\Omega,\mathbb{R})$ and $\lambda,\mu\in\mathbb{R}$ we have
\begin{align*}
\Delta_{b}(\lambda f+\mu g)&=\Delta_{b}^{L}(\lambda f+\mu g)=\operatorname{div}(\nabla_{b}^{L}(\lambda f+\mu g))=\operatorname{div}(\lambda\nabla_{b}^{L} f+\mu\nabla_{b}^{L}g)\\
&=\lambda\operatorname{div}(\nabla_{b}^{L} f)+\mu \operatorname{div}(\nabla_{b}^{L} g)=\lambda \Delta_{b}^{L} f+\mu \Delta_{b}^{L} g =\lambda \Delta_{b} f+\mu \Delta_{b} g.
\end{align*}
$(ii)$  For every $f,g\in\mathcal{C}^{\infty}(\Omega,\mathbb{R})$ we have
\begin{align*}
\Delta_{b}(fg)&=\Delta_{b}^{L}(fg)=\operatorname{div}(\nabla_{b}^{L}(fg))=\operatorname{div}(g\nabla_{g}^{L}f)+\operatorname{div}(f\nabla_{g}^{L}g)=\langle\nabla g,\nabla_{b}^{L}f\rangle+g\operatorname{div}(\nabla_{b}^{L}f)\\
&+\langle\nabla f,\nabla_{b}^{L}g\rangle+f\operatorname{div}(\nabla_{b}^{L}g) =\langle\nabla g,\nabla_{b}^{L}f\rangle+\langle\nabla f,\nabla_{b}^{L}g\rangle+f\operatorname{div}(\nabla_{b}^{L}g)+g\operatorname{div}(\nabla_{b}^{L}f)\\
&=\langle (B^{-1})^{\top}\nabla_{b}^{L} g,\nabla_{b}^{L}f\rangle+\langle (B^{-1})^{\top}\nabla_{b}^{L} f,\nabla_{b}^{L}g\rangle+f\Delta_{b}^{L}g+g\Delta_{b}^{L}f=\langle \nabla_{b}^{L} g,B^{-1}\nabla_{b}^{L}f\rangle\\
&+\langle\nabla_{b}^{L} f,B^{-1}\nabla_{b}^{L}g\rangle+f\Delta_{b}^{L}g+g\Delta_{b}^{L}f=b(\nabla_{b}^{L}g,\nabla_{b}^{L}f)+b(\nabla_{b}^{L}f,\nabla_{b}^{L}g)+f\Delta_{b}^{L}g+g\Delta_{b}^{L}f\\
&=f\Delta_{b}g+g\Delta_{b}f+b(\nabla_{b}^{L}f,\nabla_{b}^{L}g)+b(\nabla_{b}^{L}g,\nabla_{b}^{L}f).
\end{align*}
$(iii)$ The proof of this item is similar to that of the item $(ii)$, this time using the right-Laplacian instead of left-Laplacian.
\end{proof}
\begin{remark}
If $b$ is a symmetric geometric structure on $\mathbb{R}^n$ (i.e., $b$ is an Euclidean or a pseudo-Euclidean inner product) and $\Omega\subseteq\mathbb{R}^{n}_{b}$ is an open set, then the relations $(ii)$ and $(iii)$ from Proposition \ref{pimpi} are identical and become
\begin{equation*}
\Delta_{b}(fg)=f\Delta_{b}g+g\Delta_{b}f+2b(\nabla_{b}f,\nabla_{b}g), ~ \forall f,g\in\mathcal{C}^{\infty}(\Omega,\mathbb{R}),
\end{equation*}
where $\nabla_b:=\nabla_{b}^{L}=\nabla_{b}^{R}$.
\end{remark}

Next we discuss some group related properties of the $b-$Laplacian. In order to do that let us give the definitions of two natural actions of the group $G_b$.

More precisely, let $b$ be a geometric structure on $\mathbb{R}^n$ and $G_b=\{A\in \operatorname{End}(\mathbb{R}_{b}^{n}): b(A\mathbf{x},A\mathbf{y})=b(\mathbf{x},\mathbf{y}),\forall \mathbf{x},\mathbf{y}\in\mathbb{R}^{n}_{b}\}$ the associated group. This group induces two natural actions: one on scalar functions defined on $\mathbb{R}^{n}_{b}$, given by 
\begin{equation}\label{act1}
(\tau(A)\cdot f)(\mathbf{x}):=f(A^{-1}\mathbf{x}), \forall\mathbf{x}\in\mathbb{R}^{n}_{b}, ~\forall A\in G_b,
\end{equation}
and other on vector fields defined on $\mathbb{R}^{n}_{b}$, given by 
\begin{equation*}
(\tilde{\tau}(A)\cdot F)(\mathbf{x}):=A F(A^{-1}\mathbf{x}), \forall\mathbf{x}\in\mathbb{R}^{n}_{b}, ~\forall A\in G_b.
\end{equation*}
Note that for any subgroup $H\leq G_b$, a scalar function $f:\mathbb{R}^{n}_{b}\rightarrow \mathbb{R}$ is $H-$invariant if and only if $\tau(A)\cdot f=f, ~\forall A\in H$, and a vector field $F:\mathbb{R}^{n}_{b}\rightarrow \mathbb{R}^{n}_{b}$ is $H-$equivariant if and only if $\tilde{\tau}(A)\cdot F=F, ~\forall A\in H$.

Let us now present a result concerning a natural compatibility between the left/right--gradient vector fields and the above defined actions.
\begin{theorem}
Let $b$ be a geometric structure on $\mathbb{R}^n$ and $f\in\mathcal{C}^{1}(\mathbb{R}^{n}_{b},\mathbb{R})$. Then the following assertions hold true:
\begin{itemize}
\item[(i)] $\nabla^{R}_{b}(\tau(A)\cdot f)=\tilde{\tau}(A)\cdot\nabla^{R}_{b}f, ~\forall A\in G_b,$
\item[(ii)] $\nabla^{L}_{b}(\tau(A)\cdot f)=\tilde{\tau}(A)\cdot\nabla^{L}_{b}f, ~\forall A\in G_b.$
\end{itemize}
\end{theorem}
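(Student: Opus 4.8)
The plan is to push everything down to the classical gradient via Proposition \ref{rlgrd}, apply the chain rule to the linear substitution $\mathbf{x}\mapsto A^{-1}\mathbf{x}$, and then reduce each of the two identities to an algebraic relation on $A$ that is already available from the characterization $ABA^{\top}=B$ of $G_b$ (indeed, exactly the equivalences written out in the proof of Theorem \ref{timp1}).

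First I would record how the ordinary gradient transforms under the scalar action. Since $(\tau(A)\cdot f)(\mathbf{x})=f(A^{-1}\mathbf{x})$ and $A^{-1}$ is linear, the chain rule gives
\[
\nabla(\tau(A)\cdot f)(\mathbf{x})=(A^{\top})^{-1}\,\nabla f(A^{-1}\mathbf{x}),\qquad\forall\mathbf{x}\in\mathbb{R}^n_b,
\]
because the transpose of the Jacobian $A^{-1}$ is $(A^{-1})^{\top}=(A^{\top})^{-1}$.

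For item (i), I would substitute $\nabla^{R}_{b}=B\nabla$ to obtain $\nabla^{R}_{b}(\tau(A)\cdot f)(\mathbf{x})=B(A^{\top})^{-1}\nabla f(A^{-1}\mathbf{x})$, whereas the right-hand side unfolds as
\[
(\tilde{\tau}(A)\cdot\nabla^{R}_{b}f)(\mathbf{x})=A\,(\nabla^{R}_{b}f)(A^{-1}\mathbf{x})=AB\,\nabla f(A^{-1}\mathbf{x}).
\]
Thus (i) reduces to the operator identity $B(A^{\top})^{-1}=AB$, which is precisely the equivalence $ABA^{\top}=B\Leftrightarrow AB=B(A^{\top})^{-1}$ already recorded. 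Item (ii) runs in parallel with $\nabla^{L}_{b}=B^{\top}\nabla$: the two sides become $B^{\top}(A^{\top})^{-1}\nabla f(A^{-1}\mathbf{x})$ and $AB^{\top}\nabla f(A^{-1}\mathbf{x})$, so (ii) reduces to $B^{\top}(A^{\top})^{-1}=AB^{\top}$, equivalently $B^{\top}(A^{\top})^{-1}(B^{\top})^{-1}=A$, which is again one of the listed equivalences of $ABA^{\top}=B$.

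There is no genuine analytic obstacle here; the proof is a short computation. The only points requiring care are the bookkeeping of transposes and inverses in the chain rule, and matching the correct $G_b$-relation to each gradient. The conceptual content is that the relations defining $G_b$ are exactly what intertwine the cotangent factor $(A^{\top})^{-1}$ produced by the substitution with the pushforward $A$ appearing in $\tilde{\tau}$, after conjugation by $B$ for the right--gradient and by $B^{\top}$ for the left--gradient.
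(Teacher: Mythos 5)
Your proposal is correct and follows essentially the same route as the paper: both reduce to the classical gradient via Proposition \ref{rlgrd}, apply the chain rule to $\mathbf{x}\mapsto A^{-1}\mathbf{x}$, and invoke the equivalences of $ABA^{\top}=B$ (the paper writes the needed relation as $B(A^{-1})^{\top}B^{-1}=A$, you as $B(A^{\top})^{-1}=AB$, which are the same identity). The only difference is cosmetic—you compare coefficients of $\nabla f$ on both sides rather than conjugating to express everything through $\nabla^{R}_{b}f$, and you write out item (ii) explicitly where the paper says ``similar''—so there is nothing to correct.
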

\begin{proof} A direct computation shows that for any $f\in\mathcal{C}^{1}(\mathbb{R}^{n},\mathbb{R})$ the following relation holds for the classical gradient operator (i.e. the gradient operator associated to the canonical inner product $\langle\cdot,\cdot\rangle$ on $\mathbb{R}^{n}$)
\begin{equation}\label{rimpo}
\nabla(\tau(A)\cdot f)(\mathbf{x})=(A^{-1})^{\top}\nabla f(A^{-1}\mathbf{x}), ~\forall\mathbf{x}\in\mathbb{R}^{n}, ~\forall A\in\operatorname{Aut}(\mathbb{R}^{n}).
\end{equation}
$(i)$ As $G_b$ is a subgroup of $\operatorname{Aut}(\mathbb{R}^{n}_{b})=\operatorname{Aut}(\mathbb{R}^{n})$ and $\nabla^{R}_{b}f=B\nabla f$, relation \eqref{rimpo} implies that
\begin{align}\label{rimpo2}
\nabla^{R}_{b}(\tau(A)\cdot f)(\mathbf{x})=B(A^{-1})^{\top}B^{-1}\nabla^{R}_{b} f(A^{-1}\mathbf{x}),~\forall\mathbf{x}\in\mathbb{R}^{n}_{b}, ~\forall A\in G_b.
\end{align}
Taking into account the following equivalent formulation of $G_b$ $$G_b=\{A\in\operatorname{Aut}(\mathbb{R}_{b}^{n}):~ A B A^{\top}=B\},$$ we get that $$A\in G_b \Leftrightarrow  A B A^{\top}=B \Leftrightarrow B(A^{-1})^{\top}B^{-1}=A.$$ Hence, the relation \eqref{rimpo2} becomes
\begin{align*}
\nabla^{R}_{b}(\tau(A)\cdot f)(\mathbf{x})&=A \nabla^{R}_{b}f(A^{-1}\mathbf{x}),~\forall\mathbf{x}\in\mathbb{R}^{n}_{b}, ~\forall A\in G_b,
\end{align*}
which is equivalent to
\begin{align*}
\nabla^{R}_{b}(\tau(A)\cdot f)(\mathbf{x})&=(\tilde{\tau}(A)\cdot\nabla^{R}_{b}f)(\mathbf{x}),~\forall\mathbf{x}\in\mathbb{R}^{n}_{b}, ~\forall A\in G_b.
\end{align*}
$(ii)$ The proof is similar to that of the item $(i)$, this time using the relation $\eqref{rimpo}$ and the equality $\nabla^{L}_{b}f=B^{\top}\nabla f$.
\end{proof}

Now we have all the ingredients required in order to state the main result of this section which proves the $G_b -$equivariance of the $b-$Laplace operator.

\begin{theorem}\label{timpor}
Let $b$ be a geometric structure on $\mathbb{R}^n$ and $f\in\mathcal{C}^{2}(\mathbb{R}^{n}_{b},\mathbb{R})$. Then the following relation holds true:
\begin{equation*}
\Delta_{b}(\tau(A)\cdot f)=\tau(A)\cdot\Delta_{b}f, ~\forall A\in G_b.
\end{equation*}
\end{theorem}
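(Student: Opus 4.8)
The plan is to reduce the statement to two facts already established together with one elementary computation involving the divergence. First I would write the $b$-Laplacian in its right-gradient form, $\Delta_{b}f = \operatorname{div}(\nabla^{R}_{b}f)$, which is legitimate by Proposition \ref{deltaa}. Applying this to $\tau(A)\cdot f$ gives $\Delta_{b}(\tau(A)\cdot f) = \operatorname{div}\bigl(\nabla^{R}_{b}(\tau(A)\cdot f)\bigr)$, and the preceding theorem on the compatibility of $\nabla^{R}_{b}$ with the two actions yields $\nabla^{R}_{b}(\tau(A)\cdot f) = \tilde{\tau}(A)\cdot \nabla^{R}_{b}f$ for every $A\in G_b$. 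Hence everything reduces to understanding how the divergence interacts with the action $\tilde{\tau}$ on vector fields.

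The central step is therefore the intertwining identity
$$\operatorname{div}(\tilde{\tau}(A)\cdot F) = \tau(A)\cdot \operatorname{div} F,$$
valid for any smooth vector field $F$ on $\mathbb{R}^{n}_{b}$ and any $A\in\operatorname{Aut}(\mathbb{R}^{n})$. To prove it, I would set $G(\mathbf{x}) := (\tilde{\tau}(A)\cdot F)(\mathbf{x}) = A F(A^{-1}\mathbf{x})$ and compute its Jacobian by the chain rule: since $A$ is a linear (hence constant) map, one gets the clean conjugation $DG(\mathbf{x}) = A\,(DF)(A^{-1}\mathbf{x})\,A^{-1}$, with no extra terms. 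Taking the divergence amounts to taking the trace of the Jacobian, and the trace is invariant under conjugation, so $\operatorname{div} G(\mathbf{x}) = \operatorname{tr}(DG(\mathbf{x})) = \operatorname{tr}\bigl((DF)(A^{-1}\mathbf{x})\bigr) = (\operatorname{div} F)(A^{-1}\mathbf{x})$, which is precisely $(\tau(A)\cdot \operatorname{div} F)(\mathbf{x})$.

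Combining the two ingredients I would conclude, for all $A\in G_b$,
$$\Delta_{b}(\tau(A)\cdot f) = \operatorname{div}(\tilde{\tau}(A)\cdot \nabla^{R}_{b}f) = \tau(A)\cdot \operatorname{div}(\nabla^{R}_{b}f) = \tau(A)\cdot \Delta_{b}f.$$
I do not anticipate a genuine obstacle: the only mildly delicate point is the conjugation-invariance of the trace in the divergence computation, but this is immediate once one observes that the linearity of $A$ produces the conjugation $A\,(DF)\,A^{-1}$ rather than an inhomogeneous correction. Note also that the intertwining identity itself holds for every invertible $A$; it is only the gradient-equivariance step, supplied by the preceding theorem, that actually requires $A\in G_b$.
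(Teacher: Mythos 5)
Your proof is correct and follows essentially the same route as the paper: write $\Delta_b = \operatorname{div}\circ\nabla^{R}_{b}$, invoke the preceding theorem for $\nabla^{R}_{b}(\tau(A)\cdot f)=\tilde{\tau}(A)\cdot\nabla^{R}_{b}f$ on $G_b$, and finish with the intertwining identity $\operatorname{div}(\tilde{\tau}(A)\cdot F)=\tau(A)\cdot\operatorname{div}F$. The only difference is that the paper cites this last identity as classical without proof, whereas you supply the short chain-rule/trace-conjugation argument for it --- a worthwhile addition, and your observation that it holds for all of $\operatorname{Aut}(\mathbb{R}^n)$ while only the gradient step needs $A\in G_b$ matches the paper's formulation exactly.
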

\begin{proof}
We start by recalling the classical relation
\begin{equation}\label{diveq}
\operatorname{div}(\tilde{\tau}(A)\cdot F)=\tau(A)\cdot\operatorname{div}F,~ \forall A\in\operatorname{Aut}(\mathbb{R}^{n}),
\end{equation}
where $F:\mathbb{R}^{n}\rightarrow \mathbb{R}^{n}$ is an arbitrary continuously differentiable vector field on $\mathbb{R}^{n}$. 

As $\Delta_{b}=\Delta^{R}_{b}=\Delta^{L}_{b}$, using the relation \eqref{diveq} we get that for each $A\in G_b \leq \operatorname{Aut}(\mathbb{R}^{n})$ the following equalities hold
\begin{align*}
\Delta_{b}(\tau(A)\cdot f)&=\Delta^{R}_{b}(\tau(A)\cdot f)=\operatorname{div}(\nabla^{R}_{b}(\tau(A)\cdot f))=\operatorname{div}(\tilde{\tau}(A)\cdot\nabla^{R}_{b}f)\\
&=\tau(A)\cdot\operatorname{div}(\nabla^{R}_{b}f)=\tau(A)\cdot\Delta^{R}_{b}f=\tau(A)\cdot\Delta_{b}f.
\end{align*}
\end{proof}

\begin{remark} Assuming that $u=u(\mathbf{x})$ is a $b-$harmonic function (i.e. $\Delta_b u=0$) on a $G_b-$invariant open set, it follows from Theorem \ref{timpor} that $v_{A}=u(A\mathbf{x})$ is also $b-$harmonic for every $A\in G_b$. In particular we recover the following classical results.
\begin{itemize}
\item[(i)] If $b$ is the canonical inner product on $\mathbb{R}^n$ it follows that $\Delta_b=\Delta$ is the classical Laplace operator,  $G_b=O_{n}(\mathbb{R})$ is the orthogonal group and hence we recover the orthogonal invariance of the Laplace equation, $\Delta u=0$.
\item[(ii)] If $b$ is the Minkowski inner product on $\mathbb{R}^{n}$, it follows that $\Delta_b=\square$ is the d'Alembert operator,  $G_b=O_{n-1,1}(\mathbb{R})$ is the (full) Lorentz group and hence we recover the Lorentz invariance of the $(n-1)-$dimensional wave equation, $\square u=0$.
\end{itemize}
\end{remark}

%\subsection*{Acknowledgment}
%This work was supported by a grant of the Romanian National Authority for Scientific Research, CNCS-UEFISCDI, project number PN-II-RU-TE-2011-3-0103. 

\bigskip
\bigskip

\noindent {\sc R.M. Tudoran}\\
West University of Timi\c soara\\
Faculty of Mathematics and Computer Science\\
Department of Mathematics\\
Blvd. Vasile P\^arvan, No. 4\\
300223 - Timi\c soara, Rom\^ania.\\
E-mail: {\sf razvan.tudoran@e-uvt.ro}\\
\medskip

\end{document}